
\documentclass[preprint,review,11pt,authoryear]{elsarticle}

\linespread{1.3}
\usepackage{caption}
\usepackage{tikz,graphics,color,fullpage,float,epsf,caption,subcaption}
\usepackage{booktabs}
\usepackage{algpseudocode}
\usepackage[ruled, commentsnumbered]{algorithm2e}
\usepackage{pdflscape}
\usepackage{amsmath}
\usepackage{amsfonts}
\usepackage{amssymb}
\usepackage{mathtools}



\usepackage{amsmath,amssymb}
 \usepackage{amsthm}
 \usepackage{todonotes}
 \usepackage{graphicx}
\usepackage{color}
\definecolor{cornell-red}{RGB}{179,27,27}
\usepackage{hyperref} 
\usepackage{subcaption}
\usepackage{mathtools}
\usepackage{dsfont}
\usepackage{color}
\usepackage{natbib}
    \usepackage{multirow}
 \usepackage[T1]{fontenc}

\usepackage[left=1in,right=1in,top=1in,bottom=1in]{geometry}

\usepackage{hyperref} 
\usepackage[flushleft]{threeparttable}
\newtheorem{thm}{Theorem}

\newtheorem{prop}[thm]{Proposition}

\theoremstyle{definition}

\theoremstyle{remark}


\usepackage{natbib}
\newcommand{\blue}{\textcolor{black}}  
\newcommand{\calR}{\mathcal{R}}  

\newcommand{\st}{\text{s.t. }}  

\newcommand{\nb}{\pmb{n}}  
\newcommand{\nL}{\underline{n}}  
\newcommand{\nU}{\overline{n}}  
\newcommand{\calW}{\mathcal{W}}

  \journal{ArXiv (under journal review)}

\begin{document}

 \begin{frontmatter}


 

 

\title{Fleet Sizing and Allocation for On-demand Last-Mile Transportation Systems}

\author[mymainaddress1]{Karmel S. Shehadeh \corref{cor1}}
\cortext[cor1]{Corresponding author. }
 \ead{kshehadeh[AT]lehigh.edu, kas720[AT]lehigh.edu}
   \author[hwaddress,mymainaddress2]{Hai Wang}
 \author[mymainaddress2]{Peter Zhang}
 \address[mymainaddress1]{Department of Industrial and Systems Engineering, Lehigh University, Bethlehem, PA, USA}
\address[mymainaddress2]{Heinz College of Information Systems and Public Policy, Carnegie Mellon University, Pittsburgh, PA, USA}
\address[hwaddress]{School of Computing and Information Systems, Singapore Management University, Singapore}

\begin{abstract}
The last-mile problem refers to the provision of travel service from the nearest public transportation node to home or other destination. Last-Mile Transportation Systems (LMTS), which have recently emerged, provide on-demand shared transportation. In this paper, we investigate the fleet sizing and allocation problem for the on-demand LMTS. Specifically, we consider the perspective of a last-mile service provider who wants to determine the number of servicing vehicles to allocate to multiple last-mile service regions in a particular city. In each service region, passengers demanding last-mile services arrive in batches, and allocated vehicles deliver passengers to their final destinations. \blue {The passenger demand (i.e., the size of each batch of passengers) is random and hard to predict in advance,} \blue{especially with limited data during the planning process}. The quality of fleet-allocation decisions is a function of vehicle fixed cost plus a weighted sum of passenger's waiting time before boarding a vehicle and in-vehicle riding time. We propose and analyze two models---a stochastic programming model and a distributionally robust optimization model---to solve the problem, assuming known and unknown distribution of the demand, respectively. We conduct extensive numerical experiments to evaluate the models and discuss insights and implications into the optimal fleet sizing and allocation for the on-demand LMTS under demand uncertainty. 

\begin{keyword} 
\blue{Last-mile transportation},\sep on-demand transportation \sep fleet sizing and allocation \sep demand uncertainty  \sep stochastic optimization  
\end{keyword}
\end{abstract}
\end{frontmatter}

\section{Introduction}

\noindent The last-mile problem refers to the design and provision of travel services from a public transportation node to a passenger's final destination. It is a fundamental practical problem that has attracted intense attention in the past decade for several reasons. First, governments worldwide are under pressure to increase public transport's share of urban trips to reduce road congestion and air pollution. Maybe not surprisingly, urban planners recently recognized that the unavailability of last-mile services is one of the main deterrents to the use of public transport. Second, the aging of populations has increased the demand for such services. Third, more countries impose legal requirements to ensure adequate mobility for particular demographic groups, who are most likely to need last-mile services, such as people with physical disabilities. 
\vspace{1mm}

Any passenger needing on-demand last-mile service may provide advance notice to the last-mile transportation systems (LMTS) of his/her impending arrival at the alighting station and her specific final destination. \blue{Once this information is received, the LMTS assigns the passenger to one of the vehicles in the LMTS fleet, plans the vehicle's route so that it includes a stop at the passenger's destination, estimates the vehicle's departure time, and notifies the passenger accordingly}. \blue{Once all of the passengers assigned to a vehicle are on board, the vehicle executes a delivery route with stops at each passenger's destination and returns to the station to pick up passengers for its next delivery tour.} Many papers address various models and case studies of LMTS. With the high penetration of services such as Uber worldwide, most people are aware of the benefits of on-demand transportation services and request even more specialized forms, including last-mile service (\cite{wang2019ridesourcing}) .

\vspace{1mm}

 In this paper, we investigate the fleet sizing and allocation problem for the on-demand LMTS. Specifically, we consider the perspective of a last-mile service provider who wants to determine the number of servicing vehicles to allocate to multiple service regions in a particular city. In each service region (e.g., an area around a metro station), passengers demanding last-mile services arrive in batches (e.g., as a result of consecutive arrivals of metros or trains), and allocated vehicles deliver passengers to their final destinations, i.e., last-mile stops. The size of each batch of passengers (demand henceforth) is random and hard to predict in advance. The quality of fleet-allocation decisions is a function of vehicle fixed cost (vehicle rental or purchase cost)  plus a weighted sum of passenger waiting time before boarding a vehicle and in-vehicle riding time. 
\vspace{1mm}

\blue{The fleet sizing and allocation problem is challenging, especially since the practical passenger demand is stochastic (i.e., uncertain)}. If we know the exact probability distribution of random demand, we may formulate the problem as a two-stage stochastic programming model. In the first stage, we decide the number of vehicles to allocate to each service region and their routes before the realization of random demand. Then in the second stage, we observe the realizations of stochastic demands and make optimal recourse actions (assigning passengers to vehicles), conditioning on first-stage decisions, and accordingly compute the associated passenger waiting and riding times. Mathematically, the stochastic programming model identifies vehicle allocation and routing decisions that minimize the total cost, comprising the fixed cost of vehicle allocation and the expected weighted sum of waiting time and riding time for passengers, where the expectation is taken with respect to this known probability distribution of random demand. 

\vspace{1mm}

\blue{In reality, it is often notoriously difficult to accurately estimate the exact probability distribution of demand, especially with limited data during the planning stage}. \textcolor{black}{LMTS is a relatively emerging transportation mode, and thus  sufficient data on LMTS operations is not readily available. Even if LMTS companies collect data on their LMTS operations, the data may not be sufficient or have high quality to model the demand distribution. Moreover, it is challenging to obtain LMTS data from companies due to privacy issues, among others.} \blue{If we calibrate a stochastic programming model to a data sample from a biased distribution of random demand, then the resulting biased optimal decisions may demonstrate disappointing out-of-sample performance  (in terms of passenger waiting and riding times) under the true distribution-- a phenomenon known as \textit{optimizer's curse} (see \cite{esfahani2018data} and \cite{smith2006optimizer} for a detailed discussion).} Alternatively, one can construct an ambiguity set (i.e., a family) of all plausible probability distributions compatible with the available limited data or expert knowledge about demand. 
\vspace{1mm}

In this paper, we address the uncertainty of passengers' demand for last-mile service. We propose, analyze, and evaluate the computational and operational performance of two models for the fleet sizing and allocation problem, assuming known and unknown distribution of the demand, respectively. First, a stochastic programming model is proposed to minimize the fixed cost of allocated vehicles and the expectation of a weighted sum of passenger waiting and riding times, under a distributional belief of demand. Second, a distributionally robust model is proposed to minimize the fixed cost of vehicles and the worst-case (i.e., maximum) expectation of passenger waiting time and riding times; we also evaluate the worst-case expectation over an ambiguity set (i.e., a family of all possible distributions of uncertain demand) and characterize the ambiguity set by known mean and support information of demand. We conduct extensive numerical experiments and discuss the insights and implications by examining trade-offs between total cost, fleet size, and passenger waiting and riding times. 
\vspace{1mm}

The proposed model generalizes the single-service region LMTS routing and scheduling formulation in literature by (1) considering multiple service regions, (2) considering fleet sizing and allocation decisions, and (3) incorporating the uncertainty of passenger demand. To the best of our knowledge, and according to our literature review in Section~\ref{sec2:Lit}, this is the first paper that provides a theoretical and computational analysis of stochastic optimization models for the fleet sizing and allocation problem for LMTS.
\vspace{1mm}

 The reminder of the paper is structured as follows. In Section~\ref{sec2:Lit}, we review the relevant literature. In Section~\ref{Sec:formulation}, we formally define the problem and propose two model formulations. In Section \ref{sec:CompResults}, we present computational results and discuss managerial implications. Finally, we draw conclusions and discuss future directions in Section~\ref{sec:Conclusion}.

\section{Relevant Literature}\label{sec2:Lit}

\noindent Existing literature has addressed various models and case studies of the LMTS.  Several case studies analyze LMTS in different contexts, including \citet{liu2012solving}'s study of a bicycle-sharing program for a passenger LMTS in Beijing. Some studies have examined the design and performance evaluation of an LMTS from a planning perspective. For example, \citet{wang2016approximating} address the planning side by focusing on passenger LMTS from a stochastic and planning perspective and provide closed-form approximations for the performance of an LMTS as a function of the system's fundamental design parameters. \citet{zhu2020analysis} study passengers' multi-modal commuting behavior with ride-splitting and ride-sourcing systems, while considering their feeding effects on public transit---i.e., the ride-splitting fleet provides first- and last-mile services to public transit. 

Recent studies have examined the operation of an LMTS from an optimization perspective. For example, \citet{wang2019routing} focuses on LMTS from an operational perspective and provides efficient strategies for passenger assignment, vehicle routing, and scheduling operations based on a set of last-mile demand information. Similarly, \citet{agussurja2019state} study the use of ride-sharing in satisfying last-mile demands with the assumption that last-mile demands are uncertain and come in batches, and propose a two-level Markov decision process framework that is capable of generating a vehicle-dispatching policy. \citet{liu2019optimizing} focus on the fleet size and scheduling of feeder transit services while considering the influence of bike-sharing systems, propose several hybrid operation modes that combine fixed and dynamic frequencies in a bimodal period, and compare these with conventional bus scheduling with constant service frequencies. \citet{chen2020solving} focus on
solving the first-mile ride-sharing problem using autonomous vehicles and propose a mixed-integer linear programming model to determine autonomous vehicle dispatch and ride-sharing schemes for minimum operational costs. \citet{serra2019last} study the scheduling problem of last-mile service while considering uncertainty in the system, and propose a two-stage stochastic programming formulation for scheduling a set of known passengers and uncertain passengers that is realized
from a finite set of scenarios. \citet{chen2018pricing, chen2018fairness} study the pricing problem of multiple types of passengers in a LMTS using a queueing model to approximate passenger waiting time.

Personal rapid transit (PRT) and demand responsive transit (DRT), which
refer to a variety of on-demand transportation systems with characteristics that are similar, in some ways, to LMTS, have also attracted significant attention in recent years. For instance, the PRT system
control frameworks by \citet{anderson1998control}; financial
assessments by \citet{bly2005three} and \citet{berger2011personal}; performance approximations by \citet{lees2009ride}; and case studies by \citet{mueller2011simulation}.  Other papers focus on DRT concept
discussions, practical implementation, and assessment of simulations in case studies, such as \citet{horn2002multi} and \citet{quadrifoglio2008simulation}, among others. \blue{Relevant fleet sizing problems have also been studied for on-demand ride-pooling service by \citet{ke2020pricing}; one-way car sharing service by \citet{xu2019fleet}; and autonomous electric vehicles considering charging system planning by \citet{zhang2019joint}.}

\blue{In contrast to the increased awareness for LMTS modeling and methodology research, the availability of practical datasets is a limiting factor in LMTS research. In our numerical study, we construct a real case study based on New York City's travel demand data. In another paper, \citet{hao2021} curated last-mile transportation demand data arising from job-related commute in the United States, with an emphasis on the correlation between last-mile demand and household income level.}

\vspace{1mm}
For the challenge of stochastic and uncertain demand, we refer to some frameworks for optimization under uncertainty: stochastic programming, robust optimization, and distributionally robust optimization. When using stochastic programming, the goal is to optimize a certain measure of a random outcome (e.g., the expected operational cost) for a given fully known distribution of the uncertain parameters. We refer to \cite{birge2011introduction} and \cite{shapiro2014lectures} and references therein for thorough discussions about applications, formulations, and solution algorithms. Robust optimization (RO) assumes complete ignorance about the probability distribution of uncertain parameters. Instead, it assumes that an uncertain parameter's values may vary in a given constrained set, called ``\textit{uncertainty set}'' \citep{ben2015deriving,bertsimas2004price, soyster1973convex}. Optimization is performed with respect to the worst-case scenario in the uncertainty set, which may  inevitably lead to over-conservatism and suboptimal decisions for other more-likely scenarios \citep{chen2019robust, delage2010distributionally, thiele2010note}.  \blue{By focusing on the worst-case scenario, RO solutions are often overly conservative. Moreover, they usually have poor expected performances because they cannot capture the distributional information of uncertainty.}  \blue{Distributionally robust optimization (DRO) is a third approach to model uncertainty that bridge the gap between the conservatism of RO and the specificity of SP}. DRO optimal solutions are sought for the worst-case probability distribution within a family of candidate distributions, called an ``\textit{ambiguity set}''. One can use easy-to-approximate information such as the mean and range of random parameters to construct the ambiguity sets and models that better mimic reality and are less conservative than RO models. In addition, DRO models with some types of carefully designed ambiguity sets are often more tractable than their SP counterparts \citep{delage2010distributionally, rahimian2019distributionally}.


\section{Formulation and Analysis}\label{Sec:formulation}

\noindent  \textcolor{black}{In this section, we formally define the fleet sizing and allocation problem for the on-demand LMTS. We propose and analyze two optimization models, namely, a two-stage stochastic mixed-integer linear programming model in Section \ref{sec3.3:TSM} and a two-stage distributionally robust model in Section \ref{sec3.4:DR}.}  
\subsection{\textbf{Definitions and Random Parameters}}\label{sec3.1:statement}

\noindent \blue{We consider the perspective of a last-mile service provider who wants to determine the number of vehicles $m_s$ to allocate  to each service region $s \in S$ (e.g., an area around a metro station) in a particular city.}  Each service region $s \in S$ consists of a known number of last-mile stops $J_s$. For each stop $j \in J_s$ there is a known number, $K_s$, of feasible vehicle routes (i.e., a sequence of last-mile stops that a vehicle should visit on a trip). In each planing period (i.e., morning, afternoon, etc.), a set of punctual $I_s$ trains arrive at each service region $s \in S$.  Each train dispatches a batch of passengers demanding last-mile service, and allocated vehicles deliver them to their final last-mile destinations. The number of passengers (demand henceforth) that need rides to each last-mile stop $j \in J_s$ is random. The randomness of the demand stems from the uncertain number of passengers who make a last-minute request for last-mile service, in addition to those who request their last-mile service in advance.   We make the following assumptions as in \cite{wang2019routing}:

\begin{table}[t!]  
\small
\center
  \caption{Notation.}
\begin{tabular}{ll}
\hline
\multicolumn{2}{l}{\textbf{Indices}} \\
$i$& index of train \\
$j$ & index of last-mile stop \\
$k$ & index of route \\
\multicolumn{2}{l}{\textbf{Sets}} \\
$S$ &  set of service regions \\
$K_s$ & set of pre-selected feasible routes in service region $s$ \\
$I_s$ & set of trains arriving to bring passengers in sequence in service region $s$ \\
$J_s$ &  set of pre-specified last-mile stops in service region $s$ \\
\multicolumn{2}{l}{\textbf{Parameters}} \\
$M$ & maximum total number of vehicles in the fleet \\
$n_{i,j,s}$ & (random) number of passengers demanding last-mile stop $j$  arriving at the station from train $i$\\
$\nL_{i,j,s}/\nU_{i,j,s}$ & lower/upper bound of $n_{i,j,s}$  \\
$\phi_{j,k}$& 1 if last-mile stop $j$ is served by route $k$; 0 otherwise\\
$t_{k}$ & total travel time of route $k$, in terms of intervals between arrival trains\\ 
$t_{j,k}$&  travel time to last-mile stop $j$ on route $k$ \\
$c$ & vehicle capacity (i.e., number of seats) for vehicle\\
$h$ &  inter-arrival time (headway) between trains (demand batches)\\
$f_s$ & fixed cost of each vehicle \\
$\beta^{\mbox{\tiny w}}$& weight of passenger waiting time before boarding in the objective function\\
$\beta^{\mbox{\tiny r}}$ & weight of passenger in-vehicle riding time in the objective function\\
 \multicolumn{2}{l}{\textbf{Decision variables } } \\
 $m_s$ & number of vehicles allocated to service region $s$\\
$z_{i,j,k,s}$& number of passengers with destination at last-mile stop $j$ assigned to route $k$ right after arrival of train $i$ \\
$w_{i,k,s}$ & number of trips on route $k$ starting right after arrival of train  $i$ \\
 \multicolumn{2}{l}{\textbf{Intermediate variables } } \\
$u_{i,j,s}$ & number of unserved passengers with destination at last-mile stop $j$ waiting at the station \\
& after the arrival of train  $i$  \\
$v_{i,s}$ &number of available vehicles at the station after the arrival of train $i$ in service region $s$ \\
&and its corresponding service assignment\\
\hline
\end{tabular}\label{table:notation}
\end{table}


\begin{itemize}\itemsep0em
\item[A1.] The delivery fleet consists of at most $M$ vehicles, each with integer capacity $c$;
\item[\blue{A2.}] \blue{The set of last-mile stops in each service region is finite;}
\item[A3.] The set of feasible routes for LMTS vehicles in each service region is finite and preselected based on geometry, historical demand patterns, and some practical constraints---e.g., limits on the maximum number of last-mile stops on a single route or the route's maximum travel distance or travel time;
\item[A4.] The inter-arrival time (headway) between arrival trains is deterministic and equal to $h$. 
\end{itemize}

Given a fleet of at most $M$ vehicles and the sets of last-mile stops and preselected routes for LMTS vehicles, we aim to identify: (1) the number of vehicles to allocate for each service region, (2) a routing plan for the allocated vehicles  (i.e., route selection) in each service region, and (3) the assignment of passengers to vehicles for different realizations of demand patterns.  Decisions (1)--(2) are first-stage decisions that we make before observing demand patterns. If a route is selected, a vehicle should visit all of the last-mile stops specified on this route.  The assignment decisions (3) represent the recourse actions in response to the first-stage decisions and the realizations of demand patterns. The quality of fleet-allocation decisions is a function of the \textcolor{black}{vehicle fixed cost (which may include vehicle rental or purchase cost, etc.) plus a weighted sum of passenger waiting time before boarding a vehicle and in-vehicle riding time. }

\vspace{2mm}
\noindent \textbf{\textit{General notation}}: For $a,b \in \mathbb{Z}$, we define $[a]:=\{1,2,\ldots, a \}$ and $[a,b]_\mathbb{Z}:=\{ c \in \mathbb{Z}: a \leq c \leq b \}$. The abbreviations ``w.l.o.g.'' and ``w.l.o.o.'' respectively, represent ``without loss of generality'' and ``without loss of optimality.''  For notation brevity, we use ($I_s$, $J_s$, $K_s$) to denote both the number and set of (trains, last-mile stops, routes) in service region $s$.  For notational and modeling convenience, we assume w.l.o.g.\ that last-mile stops (and routes) are numbered sequentially, e.g., $J_1:=\{ 1,\ldots, 4 \}$, $J_2:=\{ 5, \ldots, 10\}$, $K_1:=\{ 1,2,3 \} $, $K_2:=\{ 4, 5, 6, 7 \}$, etc.

\subsection{\textbf{Two-stage Stochastic Model (\blue{SP}) for Fleet Sizing and Allocation}}\label{sec3.3:TSM}
\noindent In this section, we assume that we know the joint probability distribution $\mathbb{P}$ of the random number of passengers arriving at the station from train $i$ with a destination of last-mile stop $j$ in each service region $s$, $n_{i,j,s}$, for all $ s \in S, \ i \in I_s$ and $j \in J_s$ and formulate the problem as a two-stage a ``\textit{prior}'' stochastic mixed-integer programming model (\blue{SP}). In the first stage, we determine the number of vehicles to allocate to each service region, their routes, and the number of trips on each route. In the second stage, we assign passengers to vehicles for different realizations of demand and compute the associated riding and waiting-time costs for passengers. A priori optimization has a managerial advantage, since it guarantees the regularity of service, which is beneficial for both passengers and the service provider. 


Let integer decision variable $m_s$ represent the number of vehicles allocated to service region $s$. The feasible region $\mathcal{M}$ of variables $m$ is defined in \eqref{eq:RegionM} such that the number of allocated vehicles is less than or equal to the total number of vehicles in the fleet:
\begin{align}
\mathcal{M}&=\left\{ m:  \begin{array}{l} \sum_{s \in S}m_s \leq M. \end{array} \right\}  \label{eq:RegionM}
\end{align}

\blue{Let variable $w_{i,k,s}$ represent the number of trips on route $k$ starting right after the arrival of train $i$ at service region $s$, for all $i, k, s$. Let variable $v_{i,s}$ represent the number of vehicles waiting at the metro station after arrival of the train (demand batch) $i$ in service region $s$, for all $i, k, s$. Feasible region $\mathcal{W}$ of $(w,v)$ in \eqref{RegionW} defines and constrains the number of vehicles waiting at each metro station $s$ after the arrival of each train $i$ (demand batch) and its corresponding service assignment.}

\color{black}
\begin{align}\label{RegionW}
\mathcal{W}&=\left\{ (v,w):\begin{array}{l}  v_{0,s}= m_s- \sum \limits_{k \in K_s} w_{0,k,s}, \ \forall s \in S \\  
 v_{i,s}=v_{i-1,s}+ \sum \limits_{k \in K_s} w_{i-t_{k},k,s}-\sum \limits_{k \in K_s}w_{i,k,s}, \ \forall s \in S, \ \forall \blue{i \in I_s\setminus \{0\}}  \\
  v_{i,s} \geq 0, \ w_{i,k,s} \in \mathbb{Z}_,  \   \forall (i,j,k, s)   \end{array} \right\} 
\end{align}

\color{black}

Given a feasible $m \in \mathcal{M}$, $w \in \mathcal{W}$, and a joint realization of uncertain parameters $\xi:=(n_{i,j,s})$, we can compute: (1) the number of passengers $z_{i,j,k,s}$ with a destination of last-mile stop $j$ assigned to route $k$ right after the arrival of train $i$ at service region $s$, and (2) the number of unserved passengers $u_{i,j,s}$ with a destinations of each last-mile stop $j$ waiting at the metro station after the arrival of train (demand batch) $i$ at service region $s$ and its corresponding service assignment using the following linear program (see Table~\ref{table:notation} for notation):
\allowdisplaybreaks
\begin{subequations}\label{LP:metrics}
\begin{align}
  Q(m, w, \xi ):=  & \min \ \beta^{\mbox{\tiny w}} \sum \limits_{s\in S}\sum \limits_{i \in I_s} \sum \limits_{j \in J_s}  h u_{i,j,s}+ \beta^{\mbox{\tiny r}} \sum \limits_{s\in S} \sum \limits_{i \in I_s} \sum \limits_{j \in J_s} \sum \limits_{k \in K_s} t_{j,k} z_{i,j,k,s}
  \label{objSec:Metrics}\\
&  \ \  \text{s.t.} \   u_{0, j,s}= n_{0,j,s}- \sum \limits_{k \in K_s} z_{0,j,k,s} \phi_{j,k}, \forall s \in S, \ \forall  j \in J_s \label{Const1}\\
&  \blue{ u_{i,j,s}= u_{i-1,j,s}+ n_{i,j,s}- \sum \limits_{k \in K_s} z_{i,j,k,s} \phi_{j,k},  \forall s \in S, \  \forall i \in I_s\setminus \{0\}, \ \forall  j \in J_s \label{Const2}}\\
& \sum \limits_{j \in J_s} z_{i,j,k,s} \phi_{j,k} \leq c w_{i,k,s}, \ \forall s \in S, \ \forall  i \in I_s, \ \forall k \in K_s \label{Const5} \\
& \ u_{i,j,s}\geq 0,    \ z_{i,j,k,s} \geq 0, \ \forall (i,j,k, s)\label{Const6}
\end{align} \label{SecondStage}
\end{subequations}

The objective function \eqref{objSec:Metrics} minimizes a linear cost function of the total waiting time and riding time for passengers. Constraints \eqref{Const1} and \eqref{Const2} are passenger flow constraints---i.e., they define and constrain the number of unserved passengers with a destinations of each last-mile stop who are waiting at the metro station after the arrival of train $i_s$ and its corresponding service assignment. \blue{Constraint \eqref{Const5} ensures the vehicle service capacity restriction. Finally, constraint \eqref{Const6} specifies the feasible ranges of the decision variables}. Accordingly, we formulate the stochastic fleet sizing and allocation of fleet problem as follows:
\begin{align}
& \min \limits_{m \in \mathcal{M}, w \in \mathcal{W}} \Big (\sum_{s \in S} f_s m_s+\ \mathbb{E}_\mathbb{P} [Q (m, w, \xi)] \Big) \label{SP_model}
\end{align}

The \blue{SP} formulation in \eqref{SP_model} searches for vehicle sizing, allocation, routing, and scheduling decisions that minimize the vehicle fixed cost plus a weighted expected sum of passenger waiting time and riding time, \blue{where the expectation is taken with respect to a known joint probability distribution $\mathbb{P}$ of $\xi$, where $\xi$ is the a vector of demand (i.e., a vector of all $(n_{i,j,s}),$ for all $s \in S, i \in I_s$ and $j \in J_s$)}. The formulation generalizes the deterministic LMTS routing and scheduling formulation  of \cite{wang2019routing} by (1) considering multiple service regions, (2) considering fleet sizing and allocation decisions,  and (3) incorporating the uncertainty of passengers demand for LMTS. In Proposition~\ref{Prop:Binary}, \textcolor{black}{we show that the number of trips on routes that have at least two last-mile stops is at most one after each train arrival in the optimal solution}.



\begin{prop}\label{Prop:Binary}
For any $k \in K_s$, if $\sum_{j\in J_s}\phi_{j,k}\geq2$, then $w^{*}_{i,k,s}\leq 1$ in the optimal solution of formulation \eqref{SP_model}.
\end{prop}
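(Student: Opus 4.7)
The plan is a feasibility/exchange argument. Assume, toward a contradiction, that some optimal solution $(m^*, w^*, v^*, z^*, u^*)$ of~\eqref{SP_model} has $w^*_{i,k,s}\geq 2$ for indices $(i,k,s)$ with $\sum_{j\in J_s}\phi_{j,k}\geq 2$, and construct an alternative feasible solution with $\bar w_{i,k,s}=w^*_{i,k,s}-1$ whose objective value is no larger; iterating then drives every multi-stop $w^*_{i,k,s}$ down to $1$.

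The main observation that makes this plausible is that $w$ does not enter the objective of~\eqref{SP_model} directly: the fixed-cost term depends only on $m$, and $w$ affects the second-stage problem only through the capacity constraint~\eqref{Const5} and the dynamics in~\eqref{RegionW}. A quick unrolling of the recursion defining $\mathcal{W}$ shows that decreasing a single $w_{i,k,s}$ by one merely adds $+1$ to $v_{i',s}$ for $i'\in\{i,\ldots,i+t_k-1\}$ (the intervals during which that vehicle would have been out on route $k$) and leaves all other $v$'s unchanged, so $v\geq 0$ is preserved. The only potential obstruction to the reduction is therefore the route-$k$ capacity bound $\sum_{j}z^*_{i,j,k,s}\phi_{j,k}\leq c\,w_{i,k,s}$, which motivates a case split on the realized load $Z^*:=\sum_{j\in J_s}z^*_{i,j,k,s}\phi_{j,k}$.

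In the clean case $Z^*\leq c$, a single trip carries all currently assigned passengers, so keeping $z^*$ and $u^*$ fixed and setting $\bar w_{i,k,s}=1$ gives a feasible solution with identical objective value, completing the exchange. The main obstacle is the case $Z^*>c$, in which reducing $w_{i,k,s}$ forces $Z^*-c$ passengers to be reassigned---delayed in $u$, shifted to a later dispatch on route $k$, or re-routed through some $k'\in K_s$ that also covers their destination---and each of these moves can in principle raise waiting or riding cost. To close this case I would invoke the hypothesis $\sum_{j\in J_s}\phi_{j,k}\geq 2$ (the overloaded trip already spans at least two destinations) together with the optimality of the recourse $z^*$ to argue that either (i) there is an alternative route in $K_s$ whose travel time to each displaced passenger's destination is no larger than $t_{j,k}$, so the displaced passengers can be served at no extra riding cost, or (ii) the original plan admits an earlier dispatch $i''<i$ of route $k$ that strictly reduces waiting cost, contradicting optimality in the first place. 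This coupling between the released vehicle capacity and the passenger reassignment is the delicate step of the proof, and a completely general argument may require a mild richness condition on the preselected route set $K_s$.
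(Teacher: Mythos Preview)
Your exchange argument has the right shape but leaves the decisive case open, and the paper closes it with a cleaner move than either of the two options you sketch. In the overload case $Z^*>c$ you propose rerouting the displaced passengers to some unspecified $k'\in K_s$ with $t_{j,k'}\le t_{j,k}$, or else finding an earlier dispatch that contradicts optimality; you then concede that this coupling is ``the delicate step'' and may need a richness assumption on $K_s$. The paper never tries to merely \emph{remove} a duplicate trip. Instead, for a route $k$ visiting stops $j_1,j_2$ in that order with $w_{i,k,s}=2$, it \emph{replaces} the two identical trips by two trips drawn from $\{k,k_1,k_2\}$, where $k_\ell\in K_s$ is the single-stop sub-route serving only $j_\ell$: if both per-stop loads are at most $c$, use $(k_1,k_2)$; if one stop is overloaded, keep one trip on $k$ and send the overflow on the corresponding $k_\ell$. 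In every branch exactly two vehicles are dispatched at time $i$, so the capacity constraint \eqref{Const5} is met by construction and no passenger has to be delayed or pushed into $u$; and since $t_{k_\ell}<t_k$, those vehicles return no later, so the $\mathcal{W}$-dynamics are preserved. Because $t_{j_2,k_2}<t_{j_2,k}$, the riding-cost term strictly drops whenever a $j_2$-passenger is aboard, which rules out $w_{i,k,s}\ge 2$ in \emph{every} optimal solution---stronger than the ``objective no larger'' conclusion your clean-case argument would deliver.

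Your instinct about a richness condition is correct: the paper's proof tacitly assumes that all single-stop sub-routes lie in $K_s$. What you were missing is that this very assumption is exactly what dispatches the overload case---option~(ii) is unnecessary, and option~(i) becomes concrete once $k'$ is taken to be the direct single-stop route rather than an abstract ``alternative route in $K_s$.'' Committing to that choice, and replacing rather than deleting the extra trip, removes the need to touch $u$ or to argue about later dispatches at all.
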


\begin{proof} For any $k \in K_s$ such that $\sum_{j\in J_s}\phi_{j,k}=2$, w.l.o.g., assume that the last-mile stops served by route $k$ are visited in the sequence of $j_1, j_2$ in route $k$. We use $Q(w_{i,k,s})$ to denote the value of objective function \eqref{objSec:Metrics} for a solution with route $w_{i,k,s}=2$. The corresponding passenger assignments are $z_{i,j_1,k,s}$ and $z_{i,j_2,k,s}$ with $z_{i,j_1,k,s}+z_{i,j_2,k,s}\leq 2c$. Assume that route $k_1$ only serves stop $j_1$ and route $k_2$ only serves stop $j_2$.
\begin{itemize}
    \item If $z_{i,j_1,k,s}\leq c$ and $z_{i,j_2,k,s}\leq c$, we can construct another feasible solution with route  $w'_{i,k_1,s}=1$, $w'_{i,k_2,s}=1$, and passenger assignment $z'_{i,j_1,k_1,s}=z_{i,j_1,k,s}$ and $z'_{i,j_2,k_1,s}=z_{i,j_2,k,s}$. 
    \item If $c < z_{i,j_1,k,s} \leq 2c$ and $z_{i,j_2,k,s} < c$, we can construct another feasible solution with route  $w'_{i,k_1,s}=1$ and $w'_{i,k,s}=1$, and passenger assignment $z'_{i,j_1,k_1,s}=c$, $z'_{i,j_1,k,s}=z_{i,j_1,k,s}-c$ and $z'_{i,j_2,k,s}=z_{i,j_2,k,s}$. 
    \item If $z_{i,j_1,k,s} < c$ and $ c<z_{i,j_2,k,s} \leq 2c$, we can construct another feasible solution with route  $w'_{i,k_2,s}=1$ and $w'_{i,k,s}=1$, and passenger assignment $z'_{i,j_2,k_2,s}=c$, $z'_{i,j_2,k,s}=z_{i,j_2,k,s}-c$ and $z'_{i,j_1,k,s}=z_{i,j_1,k,s}$. 

\end{itemize}

Since routes $k_1$ and $k_2$ are sub-routes of route $k$, we have $t_{j_1,k_1}\leq t_{j_1,k}$, $t_{j_2,k_2}\leq t_{j_2,k}$, $t_{k_{1}}< t_{k}$ and $t_{k_{2}}< t_{k}$. Apparently, the value of objective function \eqref{objSec:Metrics} for the solution $Q(w'_{i,k,s}, w'_{i,k_1,s}, w'_{i,k_2,s}) < Q(w_{i,k,s})$, which means that the solution with $w_{i,k,s}=2$ is not the optimal solution. Similarly, we can justify that all solutions with $\sum_{j\in J_s}\phi_{j,k}>2$ and $w_{i,k,s}\geq 2$ are not the optimal solution. Therefore, in the optimal solutions, we have $w^{*}_{i,k,s}\leq 1$ for any $k\in K_s$ with $\sum_{j\in J_s}\phi_{j,k}\geq2$.

\end{proof}

\color{black}

Proposition \ref{Prop:Binary} implies that, at any vehicle dispatch decision after each train $i$ arrives, to reduce passenger riding time, we would never dispatch multiple identical routes (i.e., $w_{i,k,s}\geq2$) that visit multiple stops (i.e., $\sum_{j\in J_s}\phi_{j,k}\geq2$). In other words, if a route $k$ visits multiple stops (i.e., $\sum_{j\in J_s}\phi_{j,k}\geq2$), we would dispatch at most one vehicle to serve this route $k$ (i.e., $w_{i,k,s}\leq1$) after each train $i$ arrives.
This proposition will render many integer decision variables $w_{i,k,s}$ to binary decision variables, which can help to improve the computational efficiency of the optimization model.

\color{black}

\subsection{\textcolor{black}{\textbf{Distributionally Robust Model (DR) for Fleet Sizing and Allocation}}}\label{sec3.4:DR}

\noindent The \blue{SP} formulation in \eqref{SP_model} assumes that we know the probability distributions $\mathbb{P}$ of $\xi= (n_{i,j,s})$,	 where $ s \in S, \ i \in I_s$ and $j \in J_s$. However, in reality, it is challenging, if not impossible, to accurately identify (estimate) the true distribution of random parameters for the demand. In this section, we assume that $\mathbb{P}$ is not perfectly known. However, we know the support (i.e., upper and lower bounds) and the mean values of the random parameters. Mathematically, we consider support
\begin{align*}
&  \mathcal{R}:=\left\{ n \geq 0: \begin{array}{l} \underline{n}_{i,j} \leq n_{i,j,s} \leq \nU_{i,j,s},  \forall s \in S, i \in I_s, J \in J_s.  \end{array} \right\}
\end{align*}

In addition, we let $\mu:= \mathbb{E}[\xi]$ represent the mean (expected) value of $n$. Then we consider the following mean-support ambiguity set  $\mathcal{F}(\mathcal{R}, \mu)$:
\begin{align}\label{eq:ambiguity}
\mathcal{F}(\mathcal{R}, \mu) := \left\{ \mathbb{P} \in \mathcal{P}(\mathcal{R}): \begin{array}{l} \int_\mathcal{R} d\mathbb{P} = 1\\ \mathbb{E_P}[\xi] = \mu \end{array} \right\}
\end{align}
where $\mathcal{P}(\mathcal{R})$ in $\mathcal{F}(\mathcal{R}, \mu)$ represents the set of probability distributions supported on $\mathcal{R}$, and each distribution matches the mean values of $n$. Using the ambiguity set $\mathcal{F}(\mathcal{R}, \mu)$, we formulate the fleet sizing and allocation as the following min-max problem:
\begin{align}
& \min \limits_{m \in \mathcal{M}, w \in \mathcal{W}} \left \{ \sum_{s \in S} f_s m_s+ \sup \limits_{\mathbb{P} \in \mathcal{F}(\mathcal{R}, \mu)} \mathbb{E}_\mathbb{P} [Q(m, w, \xi)] \right \} \label{Obj:DRObjective2}
\end{align}

The formulation~\eqref{Obj:DRObjective2} seeks to identify vehicle sizing, allocation, routing, and scheduling decisions that minimize the worst-case expected cost of passenger waiting time and riding time over a family of distributions of random parameters residing in the ambiguity set $\mathcal{F}(\mathcal{R}, \mu)$ for the demand. $Q(m, w, \xi)$ is the recourse problem defined in \eqref{LP:metrics}.

\subsection{\textbf{Reformulation}}

\noindent In this section, we use duality theory and follow a standard approach in distributionally robust optimization to reformulate the min-max model in \eqref{Obj:DRObjective2} to one that is solvable.  We first consider the inner maximization problem $\sup \limits_{\mathbb{P} \in \mathcal{F}(S,  \mu)}  \mathbb{E}_\mathbb{P} [Q (m, w, \xi)]$ for a fixed vehicle allocation decision $m \in \mathcal{M}$ and \textcolor{black}{$w \in \mathcal{W}$}, where $\mathbb{P}$ is the decision variable---i.e., we are choosing the distribution that maximizes the expected value of $Q (m, w, \xi)$.
\begin{subequations}\label{Problem:InnerMax}
\begin{align}
& \max   \ \mathbb{E_P}[Q(m, w, \xi)]  \\
& \ \text{s.t.}  \ \ \mathbb{E_P}[\xi] = \mu, \label{Problem:InnerMax-C1}  \\
& \ \ \ \ \  \ \ \mathbb{E_P}[\mathds{1}_\mathcal{R}(\xi)] = 1 \label{Problem:InnerMax-C2}
\end{align} 
\end{subequations}
where $\mathds{1}_\mathcal{R}(\xi)= 1$ if $\xi \in \mathcal{R}$ and $\mathds{1}_\mathcal{R}(\xi) = 0$ if $\xi \notin \mathcal{R}$.  As we show in the proof of Proposition~\ref{Prop:DualMinMax}, problem \eqref{Problem:InnerMax} is equivalent to problem \eqref{eq:FinalDualInnerMax-1}.

\color{black}
\begin{prop}\label{Prop:DualMinMax}
For a fixed \blue{$m \in \mathcal{M}$} and \blue{$ w \in \calW$}, problem \eqref{Problem:InnerMax} is equivalent to 
\begin{align}  \label{eq:FinalDualInnerMax-1}
& \min  \ \ \Big [ \sum \limits_{s \in S} \sum \limits_{i \in I_s} \sum \limits_{j \in J_s} \mu_{i,j,s}\rho_{i,j,s}  + \max \limits_{n \in \mathcal{R}} \{ \textcolor{black}{Q(m,w,\xi)} +  \sum \limits_{s \in S} \sum \limits_{i \in I_s} \sum \limits_{j \in J_s} - n_{i,j,s} \rho_{i,j,s} \} \Big] 
\end{align}
\end{prop}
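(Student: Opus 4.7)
The plan is to view \eqref{Problem:InnerMax} as an infinite-dimensional linear program over non-negative measures supported on $\mathcal{R}$, and pass to its Lagrangian dual. I would associate dual multipliers $\rho_{i,j,s} \in \mathbb{R}$ with the moment constraints \eqref{Problem:InnerMax-C1} and a single multiplier $\lambda \in \mathbb{R}$ with the total-mass constraint \eqref{Problem:InnerMax-C2}. The Lagrangian can then be written as
\begin{equation*}
  \sum_{s \in S}\sum_{i \in I_s}\sum_{j \in J_s} \rho_{i,j,s}\mu_{i,j,s} + \lambda + \int_{\mathcal{R}} \Bigl( Q(m,w,\xi) - \sum_{s \in S}\sum_{i \in I_s}\sum_{j \in J_s} \rho_{i,j,s} n_{i,j,s} - \lambda \Bigr)\, d\mathbb{P}(\xi).
\end{equation*}

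Next, I would maximize over $\mathbb{P} \in \mathcal{P}(\mathcal{R})$. For the supremum to be finite, the integrand must be non-positive on $\mathcal{R}$, so the dual feasibility becomes the semi-infinite constraint $\lambda \geq Q(m,w,\xi) - \sum \rho_{i,j,s} n_{i,j,s}$ for every $\xi \in \mathcal{R}$. Minimizing over $\lambda$ tightens this to $\lambda = \max_{\xi \in \mathcal{R}}\{Q(m,w,\xi) - \sum \rho_{i,j,s} n_{i,j,s}\}$, and substituting back produces exactly the expression in \eqref{eq:FinalDualInnerMax-1}. The remaining minimization is over the free multipliers $\rho_{i,j,s}$, which is also consistent with the displayed form (the mass-normalization multiplier $\lambda$ has been absorbed into the inner max).

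The crucial step, and in my view the main obstacle, is establishing strong duality between \eqref{Problem:InnerMax} and its dual so that the equivalence (and not merely a weak-duality inequality) holds. I would invoke a standard moment-problem duality result (\emph{e.g.}, Isii's theorem, or the conic-duality arguments used in \citet{shapiro2014lectures} and \citet{delage2010distributionally}) whose hypotheses are met here: the support set $\mathcal{R}$ is a nonempty compact rectangle, the function $Q(m,w,\cdot)$ is the optimal value of a finite linear program with right-hand side linear in $\xi$, which makes it piecewise-linear and hence upper semicontinuous (in fact continuous) on $\mathcal{R}$, and the moment vector $\mu$ lies in the relative interior of the moment cone because $\underline{n}_{i,j,s} \leq \mu_{i,j,s} \leq \overline{n}_{i,j,s}$ with strict inequality under mild nondegeneracy. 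These conditions yield a Slater-type point for the dual and zero duality gap.

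Finally, I would record that the min in \eqref{eq:FinalDualInnerMax-1} is attained: by the compactness of $\mathcal{R}$ and continuity of $Q(m,w,\cdot)$, the inner maximum is a well-defined function of $\rho$, convex in $\rho$ (being the supremum of linear functions plus a constant), so that the outer minimization is a convex program. With the dual formulation in hand, one can then couple it with the outer $\min_{m \in \mathcal{M}, w \in \mathcal{W}}$ in \eqref{Obj:DRObjective2} and treat the combined $\min$-$\min$ jointly, which is precisely what the subsequent sections of the paper will exploit to obtain a tractable reformulation.
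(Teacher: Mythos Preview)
Your proposal is correct and follows essentially the same route as the paper: dualize the moment problem by attaching multipliers $\rho_{i,j,s}$ to the mean constraints and a scalar multiplier (your $\lambda$, the paper's $\theta$) to the mass constraint, then eliminate the scalar multiplier via $\lambda=\max_{n\in\mathcal{R}}\{Q(m,w,\xi)-\sum\rho_{i,j,s}n_{i,j,s}\}$ and invoke strong duality for moment problems under the interior-point (Slater-type) assumption on $\mu$. The only slip is notational---when you argue that finiteness forces the integrand to be non-positive you must be taking the supremum over all non-negative measures on $\mathcal{R}$, not over $\mathcal{P}(\mathcal{R})$ as written; this is exactly how the paper sets up the primal (with $\mathbb{P}\geq 0$ and the mass constraint explicit), and your logic is otherwise identical.
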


\color{black}

\begin{proof} For a fixed $(m,w)$  we can formulate problem \eqref{Problem:InnerMax} as the following linear functional optimization problem: 
\begin{subequations}\label{InnerMax}
\begin{align}
& \max_{\mathbb{P} \geq 0} \ \int_{\mathcal{R}}Q(m, w, \xi)  \ d \mathbb{P}  \\
& \ \text{s.t.} \ \   \int_{\mathcal{R}} \blue{n_{i,j,s}} \ d\mathbb{P}= \mu_{i,j,s}
  \quad \quad \forall s \in S, \ \forall i \in I_s, \ \forall j \in J_s \label{ConInner1}\\
& \ \ \ \ \  \ \ \int_{\mathcal{R}}  d\mathbb{P}= 1 \label{ConInner:Distribution}
\end{align} 
\end{subequations}

\color{black}

\noindent Letting  $\rho$ and $\theta$ be the dual variable associated with constraints \eqref{ConInner1} and \eqref{ConInner:Distribution}, respectively, we present problem \eqref{InnerMax} in its dual form:
\begin{subequations}
\begin{align}
& \min_{(\rho, \alpha, \lambda, \theta)}  \ \ \ \ \ \ \ \ \ \ \sum \limits_{s \in S} \sum \limits_{i \in I_s} \sum \limits_{j \in J_s} \mu_{i,j,s} \rho_{i,j,s} + \theta \label{DualInner:Obj} \\
& \ \qquad \ \  \text{s.t.} \ \ \ \ \ \ \  \sum \limits_{s \in S} \sum \limits_{i \in I_s} \sum \limits_{j \in J_s}  n_{i,j,s} \rho_{i,j,s}+  \theta \geq \textcolor{black}{Q(m,w,\xi)},  && \forall n \in \mathcal{R} \label{DualInner:PrimalVariabl}
\end{align} \label{DualInnerMax}
\end{subequations}
\color{black}
\noindent where $\rho$ and $\theta$ are unrestricted in sign, and constraint \eqref{DualInner:PrimalVariabl} is associated with the primal variable $\mathbb{P}$. Under the standard assumption that $\mu$ belongs to the interior of the set $\{ \int_{\mathcal{R}} n_{i,j,s}\mathbb{Q}: \mathbb{Q}$ is a probability distribution over support $\mathcal{R} \}$, strong duality holds between \eqref{InnerMax} and \eqref{DualInnerMax} (see \cite{bertsimas2005optimal} for a detailed discussion of this assumption and \cite{jian2017integer, shehadeh2020distributionally,shehadeh2021Sanci} for applications). \blue{Note that for fixed ($\rho, \theta$), constraints \eqref{DualInner:PrimalVariabl} are  equivalent to $\theta \geq \max \limits_{ n \in \mathcal{R}} \{ \textcolor{black}{Q(m,w,\xi)} - \sum \limits_{s \in S} \sum \limits_{i \in I_s} \sum \limits_{j \in J_s}  n_{i,j,s} \rho_{i,j,s} \} $.  Since we are minimizing $\theta$ in \eqref{DualInnerMax}, the dual formulation of \eqref{InnerMax} is equivalent to}
\color{black}
\begin{align*} 
& \min \ \ \Big [ \sum \limits_{s \in S} \sum \limits_{i \in I_s} \sum \limits_{j \in J_s} \mu_{i,j,s}\rho_{i,j,s}  + \max \limits_{n \in \mathcal{R}} \{ \textcolor{black}{Q(m,w,\xi)} +  \sum \limits_{s \in S} \sum \limits_{i \in I_s} \sum \limits_{j \in J_s} - n_{i,j,s} \rho_{i,j,s} \} \Big] 
\end{align*}
\end{proof}

\color{black}

\blue{Note that the recourse problem $Q(m, w, \xi)$ is a minimization problem}. Thus, in  \eqref{eq:FinalDualInnerMax-1}  we have an inner max-min problem. Next, we use $Q(m,w, \xi)$ properties to derive an equivalent single minimization reformulation of \eqref{eq:FinalDualInnerMax-1}. For fixed $m \in \mathcal{M}$, $w \in \mathcal{W}$, and a realized value of $\xi=n$, $ Q(m, w, \xi)$ is a linear program. We formulate $ Q(m, w, \xi)$ in its dual form as 
\begin{subequations}\label{DualofQ}
\begin{align}
 Q(m, w, \xi)=& \max_{\Gamma, \psi} \Big \{ \sum \limits_{s \in S} \sum \limits_{i \in I_s} \sum \limits_{j \in J_s}  n_{i,j,s} \Gamma_{i,j,s}+ \sum \limits_{s \in S} \sum \limits_{i \in I_s} \sum \limits_{k \in K_s} c w_{i,k,s} \psi_{i,k,s}  \Big \}&\\
 & \st \ \  \Gamma_{i,j,s}- \Gamma_{i+1,j,s} \leq \beta^{\mbox{\tiny w}} h, \qquad \forall (s, i, j) \label{ConstDual1}\\
 & \qquad \phi_{j,k} (\Gamma_{i,j,s}+ \psi_{i,k,s}) \leq t_{j,k} \beta^{\mbox{\tiny r}}, \qquad \forall (i, j, k, s) \label{ConstDual2}\\
 & \qquad \psi_{i,k,s} \leq 0, \ \Gamma_{I+1,j,s}=0, \qquad \forall (i, j, k, s)\label{ConstDual3}
\end{align}
\end{subequations}
where $\Gamma$ and $\psi$ are the dual variables associated with constraints \eqref{Const1}--\eqref{Const2} and \eqref{Const5}, respectively.  \blue{Given the dual formulation of $ Q(m, w, \xi)$ in \eqref{DualofQ} and a fixed and feasible ($\pmb{\rho, w}$), we can rewrite the inner maximization problem $\max \limits_{n \in \calR} \{ Q(m,w,\xi)  +  \sum \limits_{s \in S} \sum \limits_{i \in I_s} \sum \limits_{j \in J_s} - n_{i,j,s} \rho_{i,j,s}\}$ in \eqref{eq:FinalDualInnerMax-1} as follows}
\begin{subequations}\label{Inner2}
\begin{align}
\max \limits_{\Gamma, \psi, \nb \in [\pmb{\nL, \nU}]} & \ \  \Big \{ \sum \limits_{s \in S} \sum \limits_{i \in I_s} \sum \limits_{j \in J_s}  n_{i,j,s} (\Gamma_{i,j,s}-\rho_{i,j,s})+ \sum \limits_{s \in S} \sum \limits_{i \in I_s} \sum \limits_{k \in K_s} c w_{i,k,s} \psi_{i,k,s}    \Big\} \\
\ \ \ \st \ & \eqref{ConstDual1}-\eqref{ConstDual3}
\end{align}
\end{subequations}
As we show in the proof of Proposition \ref{Prop2} in \ref{Appex:Proof_Prop2}, for fixed $(m,  w, \xi)$ and $\rho$, problem \eqref{Inner2} is equivalent  to the minimization problem in \eqref{Inner2_Final_main} .
\begin{prop}\label{Prop2}
\blue{For fixed $(m,  w, \xi)$ and $\rho$ problem \eqref{Inner2}  is equivalent to}
\begin{subequations}\label{Inner2_Final_main}
\begin{align}
\min \limits_{y \geq 0,x\geq 0} & \  \sum \limits_{s \in S} \sum \limits_{i \in I_s} \sum \limits_{j \in J_s} \Big[ \beta^{\mbox{\tiny w}} h y_{i,j,s}+ \sum \limits_{k \in K_s} t_{j,k} \beta^{\mbox{\tiny r}} x_{i,j,k,s} \Big]-\sum \limits_{s \in S} \sum \limits_{i \in I_s} \sum \limits_{j \in J_s} \nU_{i,j,s} \rho_{i,j,s} \\
 \st & \ y_{0,j,s}+\sum \limits_{k \in K_s} \phi_{j,k} x_{0,j,k,s} \geq  \nU_{0,j,s}, \qquad \forall s \in S,  \ j \in J_s  \\
&  \ y_{i,j,s} -y_{i-1,j,s}+ \sum \limits_{k \in K_s} \phi_{j,k} x_{i,j,k,s} \geq   \nU_{i,j,s}, \qquad \forall s \in S, \  i \in I_s\setminus \{0\}, \  j \in J_s \\
&  \  \sum \limits_{j \in J_s} \phi_{j,k} x_{i,j,k,s}  \leq cw_{i,k,s}, \qquad \forall s \in S, \ i \in I_s, \ j \in J_s\\
& \ m \in \mathcal{M}, \ w \in \mathcal{W}, \  y \geq 0, \ x\geq 0
\end{align}
\end{subequations}

\end{prop}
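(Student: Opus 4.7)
The plan is to derive \eqref{Inner2_Final_main} from \eqref{Inner2} by applying strong LP duality in two stages, since \eqref{Inner2} contains the bilinear term $n_{i,j,s}\Gamma_{i,j,s}$ and cannot be dualized in one shot. The first stage eliminates the inner $\max$ over $(\Gamma,\psi)$ for a fixed $n$, and the second stage absorbs the remaining $\max$ over $n$ into the primal LP defining $Q(m,w,\xi)$.

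First, I would fix $n \in [\underline{n},\overline{n}]$ and observe that the maximization over $(\Gamma,\psi)$ in \eqref{Inner2} is exactly the dual LP \eqref{DualofQ} shifted by the constant $-\sum n\rho$. Strong LP duality applied to $Q(m,w,n)$ in \eqref{LP:metrics} then collapses this inner problem to $Q(m,w,n) - \sum_{s,i,j} n_{i,j,s}\rho_{i,j,s}$, so \eqref{Inner2} reduces to $\max_{n\in[\underline{n},\overline{n}]}\{Q(m,w,n) - \sum n\rho\}$.

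Second, I would substitute the primal representation of $Q(m,w,n)$, safely relaxing the flow-conservation equalities \eqref{Const1}--\eqref{Const2} to $\geq$ inequalities (the positive cost coefficient $\beta^{\mathrm{w}}h$ on $u$ forces these to bind at optimality). Writing out the resulting $\max_n\min_{u,z}$ and swapping the two operators by LP minimax (both problems are convex polyhedral, so Sion's theorem applies), I would arrive at a min over $(u,z)$ with an inner max over $n$. The monotonicity of the feasibility set $F(n)$ in $n$---larger $n$ only tightens the $\geq$ constraints, so $F(\overline{n})\subseteq F(n)$ for every $n\in[\underline{n},\overline{n}]$---together with the convex nondecreasing dependence of $Q$ on $n$, allows me to argue that the worst-case $n$ is attained at $n=\overline{n}$. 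Replacing $n$ by $\overline{n}$ in both the flow constraints and in the coupling term $-\sum n\rho$, and relabeling $(u,z)$ as $(y,x)$, delivers exactly \eqref{Inner2_Final_main}.

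The main obstacle I expect is rigorously justifying that $n=\overline{n}$ is indeed the worst case despite the $-\sum n\rho$ term having no a priori sign control on $\rho$. The cleanest route is to verify a saddle-point relationship: at the optimum, the complementary slackness conditions linking the inner dual variables $\Gamma^{\star}$ to the ambiguity-set dual variables $\rho$ force $\Gamma^{\star}\geq \rho$ componentwise, which combined with the monotonicity of $Q$ in $n$ gives the desired worst-case structure. Once this saddle-point/complementary-slackness link is established, the remaining calculations---pulling the constant $-\sum\overline{n}\rho$ out of the minimization and relabeling variables---are routine LP manipulations.
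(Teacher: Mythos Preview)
The paper takes a shorter path than you propose. It never leaves the dual \eqref{Inner2}: after observing that $\Gamma_{i,j,s}\geq 0$ may be assumed w.l.o.o.\ (from $\psi\leq 0$ and constraint \eqref{ConstDual2}), it runs a coordinate-wise case split on the sign of $\rho_{i,j,s}$. If $\rho_{i,j,s}<0$ then $\Gamma_{i,j,s}-\rho_{i,j,s}>0$ automatically, forcing $n_{i,j,s}=\nU_{i,j,s}$. If $\rho_{i,j,s}\geq 0$, the two regimes $\Gamma_{i,j,s}<\rho_{i,j,s}$ and $\Gamma_{i,j,s}>\rho_{i,j,s}$ give objective contributions $\nL_{i,j,s}(\Gamma_{i,j,s}-\rho_{i,j,s})\leq 0$ and $\nU_{i,j,s}(\Gamma_{i,j,s}-\rho_{i,j,s})\geq 0$ respectively, and the joint maximizer over $(n,\Gamma)$ selects the latter. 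Either way $n=\nU$, after which \eqref{Inner2} is a pure LP in $(\Gamma,\psi)$ whose dual is exactly \eqref{Inner2_Final_main}. One dualization, no minimax swap, no return to the primal.

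Your route---undo the dual to recover $Q(m,w,n)$, write $\max_n\min_{(u,z)}$, invoke Sion, then argue $n=\nU$---has a concrete gap at the Sion step. The feasible set of $(u,z)$ depends on $n$ through the flow inequalities, so the problem is not posed on a product domain, and the inner value $Q(m,w,\cdot)$ is convex (not concave) in $n$; Sion's minimax theorem does not apply in this configuration. Your proposed patch, that ``complementary slackness linking $\Gamma^{\star}$ to $\rho$ forces $\Gamma^{\star}\geq\rho$,'' is also misdirected here because $\rho$ is \emph{fixed} in this proposition---there is no outer optimality condition on $\rho$ available to invoke. The inequality $\Gamma^{\star}\geq\rho$ must be extracted from the structure of \eqref{Inner2} itself, and that is precisely what the paper's direct case analysis delivers without ever passing back through the primal.
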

\blue{Combining the inner problem in the form of \eqref{Inner2_Final_main} with the outer minimization problems in  \eqref{eq:FinalDualInnerMax-1} and \eqref{Obj:DRObjective2}, we derive the following equivalent MILP reformulation of the DR model in \eqref{Obj:DRObjective2} (see \ref{Appex:Proof_Prop2})}
\begin{subequations}\label{DR_Final_reform}
\begin{align}
 \min  &\Bigg \{ \sum_{s \in S} f_s m_s+ \sum \limits_{s \in S} \sum \limits_{i \in I_s} \sum \limits_{j \in J_s}( \mu_{i,j,s}- \nU_{i,j,s} )\rho_{i,j,s}+   \sum \limits_{s \in S} \sum \limits_{i \in I_s} \sum \limits_{j \in J_s} \Big[ \beta^{\mbox{\tiny w}} h y_{i,j,s}+ \sum \limits_{k \in K_s} t_{j,k} \beta^{\mbox{\tiny r}} x_{i,j,k,s} \Big] \Bigg \}\\
 \st &  \ m \in \mathcal{M}, \ w \in \mathcal{W}, \  y \geq 0, \ x\geq 0 \\
& \ y_{0,j,s}+\sum \limits_{k \in K_s} \phi_{j,k} x_{0,j,k,s} \geq  \nU_{0,j,s}, \qquad \forall s \in S,  \ j \in J_s  \\
&  \ y_{i,j,s} -y_{i-1,j,s}+ \sum \limits_{k \in K_s} \phi_{j,k} x_{i,j,k,s} \geq   \nU_{i,j,s}, \qquad \forall s \in S, \  i \in I_s\setminus \{0\}, \  j \in J_s \\
&  \  \sum \limits_{j \in J_s} \phi_{j,k} x_{i,j,k,s}  \leq cw_{i,k,s}, \qquad \forall s \in S, \ i \in I_s, \ j \in J_s
\end{align}
\end{subequations}

\section{\textcolor{black}{Computational Experiments and Implications}}\label{sec:CompResults}
\noindent  The primary objective of our computational study is to evaluate the computational and operational performance of the proposed models. We solve the two-stage \blue{SP} in \eqref{SP_model} via the the sample average approximation (SAA) approach in \ref{Appx:SAA} (see, e.g., \cite{kim2015guide,kleywegt2002sample} for a detailed discussion of SAA).  Section~\ref{sec5.1:Expsetup} presents the details of data generation and experimental design. In Section \ref{sec5.2:CPU}, we evaluate the solution times of the \blue{SP} and DR models. In Section~\ref{sec5;optimalsolutions}, we evaluate the optimal solutions of the \blue{SP} and DR models and their out-of-sample simulation performance. We close by analyzing the sensitivity of the optimal solutions to different parameter settings in Section~\ref{sec5.4:sensitivity}.

\subsection{\textcolor{black}{\textbf{Experimental Design and Computational Setup}}}\label{sec5.1:Expsetup}

\noindent  We first construct four instances (instance 1-4 henceforth), in part based on the parameter settings and assumptions made by \cite{wang2019routing} (which address the deterministic counterpart LMTS routing and scheduling problem for one service region). We summarize our test instances in Table~\ref{table:DRAFTInstances}. Each of the four instances is characterized by the number of regions $S$, number of last-mile stops in each region $J_s$, and number of routes in each region $K_s$. The sizes of the instances vary and correspond to different practical contexts. \blue{In addition to these four instances, we then construct an instance based on the actual on-demand transportation data related to New York City (NYC instance henceforth)\footnote{\blue{Souce: https://www1.nyc.gov/site/tlc/about/tlc-trip-record-data.page. The dataset contains the yellow and green taxi trip records include fields capturing pick-up and drop-off dates/times, pick-up and drop-off locations, and driver-reported passenger counts. It was collected and provided to the NYC Taxi and Limousine Commission (TLC) by technology providers authorized under the Taxicab \& Livery Passenger Enhancement Programs (TPEP/LPEP). Although it is not a real demand record for an exact existing LMTS, since the dataset contains real information of on-demand passengers, which also reflects the actual spatial and temporal patterns and uncertainty of the demand.}}, which consists of $S=4$ regions. The procedure to construct the NYC instance, the details of last-mile stops and routes, and the empirical statistics of batch demand are summarized in \ref{AppexNYC}.}

To generate demand profile for the instance 1-4, we use a similar magnitude of the number of passengers in the LMTS literature (e.g., \cite{wang2016approximating,wang2019routing}), as well as the same random parameter generation procedures in the distributionally robust scheduling and optimization literature (e.g., \cite{jian2017integer, shehadehDMFRS, mak2014appointment}). For instance 1--4,  we randomly generate the mean values $\mu$ of $n$ from a uniform distribution ${U[ 1, 4]}$ and standard deviation $\sigma=0.5\mu$. We randomly generate $N$ in-sample realizations $(n_{i,j,s}^{\mbox{\tiny n}})$ by following lognormal (LogN) distributions with the generated   $\mu_{i,j,s}$  and $\sigma_{i,j}$,  for all $ s\in S, i \in I_s, j \in J_s, n \in N$. LogN is a standard distribution of model customers' demand and service times in a wide range of applications. \cite{kamath2002bayesian} results suggest that the LogN is a suitable distribution for modeling stochastic demands in an economic context. \blue{For the NYC instance, in Table~\ref{table:NYCStats} in \ref{AppexNYC}, we present the empirical $\mu$ and $\sigma$ of batch demand $n_{i,j,s}$.}

According to \cite{gomez1999essays}, for work trips in San Francisco, the monetary value of a unit of transfer waiting time is 195$\%$ of the user's after-tax wages, and the monetary value of a unit of in-vehicle riding time is 76$\%$ of the user's after-tax wages. In general, we should have $\beta^{\mbox{\tiny w}}>\beta^{\mbox{\tiny r}}$ in the objective function. Following similar parameter selections as in \cite{wang2019routing}, we normalize $\beta^{\mbox{\tiny r}}=1$ and $\beta^{\mbox{\tiny w}}\in[2,3]$ (we perform sensitivity analysis in Section~\ref{sec5.4:sensitivity}). As for the vehicle fixed cost $f_s$, if the service provider already has a fleet of vehicles, $f_s$ could be very small; if the service provider rents vehicles, $f_s$ should include the rental fee and operating cost (e.g., fuel and gas); if the service provider needs to purchase a new fleet of vehicles, $f_s$ may be very large and a complex depreciation 
should be considered. \textcolor{black}{Unless stated otherwise, we test two values of $f_s$: (1) $f_s=0$ (ignoring the fixed cost and assuming an existing vehicle fleet), and (2) $f_s=4,000,$ from a range of [2,000, 6,000] (renting vehicles to serve passengers who are less sensitive to riding and waiting times)\footnote{The range is approximated considering the general after-tax wage for city residents, the rental price of vehicles with capacity 4-10, vehicle price and possible depreciation, fuel cost, etc.} (considering vehicle rental fee and/or depreciation and operating cost; see Appendix \ref{Appix:fixed cost}). We investigate the impact of $f_s$ in Section~\ref{sec5.4:sensitivity}. }

\vspace{1mm}
\textcolor{black}{As in prior applied distributionally robust literature (see, e.g., \cite{jian2017integer}, \cite{shehadeh2021Sanci}, \cite{shehadeh2020distributionally}, and references therein), we respectively use the $20\%$-quantile and $80\%$-quantile values of the $N$ in-sample data to approximate the lower $\underline{n}$ and upper $\overline{n}$ bounds of $n$. We optimize the SP model by using all of the $N$ data points, and the DR model with the corresponding mean,  lower bounds, and upper bounds.} We implemented the models using AMPL2016 programming language calling CPLEX V12.6.2 as a solver with default settings. We ran all experiments on a computer with an Intel Core i7 processor, 2.5 GHz CPU, and 16 GB (1600MHz DDR3) of memory, and imposed a solver time limit of 1 hour.



\begin{table}[t!]
 \center 
 \footnotesize
   \renewcommand{\arraystretch}{0.5}
  \caption{Four Instances. Notation: $S$ is \# of regions, $s$ is a region, $I_s$ is \# of trains, $J_s$ is \# of last-mile stops in regions $s$, $K_s$ is number of routes in region $s$.}
\begin{tabular}{ccccccccccccc}
\hline
\textbf{Inst} & \textbf{$S$} & $s$ &  \textbf{$I_s$}&  \textbf{$J_s$}   & \textbf{$K_s$} &&  \textbf{Inst} & \textbf{$S$} & $s$   &  \textbf{$I_s$} &  \textbf{$J_s$}   & \textbf{$K_s$} \\
\hline
1 &  4   & $ \ \ s=1$ & 12 & 4 & 10 & & 2&  4 & $ \ \ s=1$ & 12 & 4 & 13 \\
&  & $ \ \ s=2$ & 12 & 6& 23 &&& &$ \ \ s=2$ & 12 & 6 & 31 \\
&& $ \ \ s=3$ & 12 & 6& 30 && &&$ \ \ s=3$ & 12 & 6 & 24 \\
&& $ \ \ s=4$ & 12 & 8& 39 && & & $ \ \ s=4$ & 12 & 8 & 40 \\
\\
 3 & 5  & $ \ \ s=1$ & 12 & 4 & 13&  &4 & 6 &  $ \ \ s=1$ & 12 & 4 & 13\\
& &$ \ \ s=2$ & 12 & 6 & 31    &&&  & $ \ \ s=2$ & 12 & 6 & 31 \\
&& $ \ \ s=3$ & 12 & 6 & 24    &&& &  $ \ \ s=3$ & 12 & 6 & 24 \\
&& $ \ \ s=4$ & 12 & 8 & 40   && & & $ \ \ s=4$ & 12 & 8 & 40 \\
& &$ \ \ s=5$ & 12 & 8 & 49   && & & $ \ \ s=5$ & 12 & 8 & 49 \\
 && & & & & &&  &$ \ \ s=6$ & 12 & 8 & 59 \\
\hline																									
\end{tabular} 
\label{table:DRAFTInstances}
\end{table}


\subsection{\textbf{CPU Time}}\label{sec5.2:CPU}

\noindent \textcolor{black}{In this section, we analyze the solution times of the \blue{SP} and DR models. For each instance in Table \ref{table:DRAFTInstances}, we first generate mean demand $\mu$ for each last-mile stop in each service region from $U[1,4]$ (low to average demand) and $U[3, 7]$ (high demand), and set $\sigma=0.5 \mu$. Then we generate the in-sample data from LogN using the generated $\mu$ and $\sigma$. To approximate the lower $\underline{n}$ and upper $\overline{n}$ of $n$, we respectively use the $20\%$-quantile and $80\%$-quantile values of the $N=100$ in-sample data.  We optimize the \blue{SP} by using all of the $N$ data points, and the DR model with the corresponding corresponding mean,  lower bounds, and upper bounds.  We use $N=100$, $f_s \in \{ 0, 4,000 \}$, and $M \in \{ 40, 60, 80\} $. For each instance, we impose a time limit of 7,200 seconds (i.e., 2 hours).}

\vspace{1mm}
 \textcolor{black}{Our choice of the sample size $N$ to solve the SAA of \blue{SP} was motivated by the trade-off between the computational effort required to solve the resulting mixed-integer linear programs (MILPs) and the quality of approximation of the expected value objective of \blue{SP} by its SAA approximation.  On the one hand, the sizes of MILP instances increase with $ N $ and their solution times also increase. On the other hand, optimal solutions of SAA instances with larger values of $ N $ are likely to be closer to optimality compared with the expected value objective.}
\vspace{1mm}

\textcolor{black}{The literature on the SAA method provides theoretical insights and guidance for selecting a sample size from this perspective. We implemented the so-called Monte Carlo Optimization (MCO) procedure to compute statistical lower and upper bounds on the optimal value of \blue{SP} based on an optimal solution to its SAA approximation, which in turn provides a statistical estimate of the relative approximation gap between the optimal value of \blue{SP} and its SAA approximation (see, e.g., \cite{homem2014monte} and \cite{linderoth2006empirical} for a thorough discussion of MCO). Applying the MCO procedure to our \blue{SP} model with $N = 100$, we estimate the relative approximation gaps for the \blue{SP} instances described in Table~\ref{table:DRAFTInstances} to range between 1\% and 5\%. In contrast, larger sample sizes resulted in longer solution times without consistent and significant improvements in the relative approximation gaps. Based on these considerations, we selected $N = 100$ for our computational experiments.}


\begin{table}[t!]
\center 
 \footnotesize
   \renewcommand{\arraystretch}{0.9}
\caption{Solution times (in seconds) using the DR and SAA of \blue{SP}. ``--'' indicates termination without any feasible MIP solutions (and thus no upper bound).} 
\begin{tabular}{cclllllllllllllllllllllll}
 \hline
\textbf{Inst}&  \textbf{Range} & \textbf{$\pmb{M}$} & $\pmb{f_s}$ &  \textbf{DR} & \textbf{SAA}  & \textbf{Range} &  \textbf{$\pmb{M}$} & $\pmb{f_s}$ &  \textbf{DR} & \textbf{SAA}\\
 \hline
1 &   $[1,4]$ & 40 & 0 & 0.30 & 197 &  $[3,7]$ & 40 & 0 & 1.67& 2,477 \\
& & &   4,000 & 1.7 & 7,110 &  &  & 4,000 & 0.2 & 7,200\\ 
\\
	& & 60  & 0 & 0.25& 60 &  & 60 & 0 &2  & 305 \\
	& & &   4,000 & 25& 7,200 & & & 4,000 & 13 & 7,200\\

\\
	& & 80   & 0 & 0.22& 31& & 80 &0 & 0.27 & 230 \\
 	& & &  4,000 & 18 & 7,200& & & 4,000 &12 & 2,542 \\

\\

2 & $[1,4]$  & 40 & 0& 0.3 & 7,200 &   $[3,7]$ & 40 & 0 & 1 &7,200  \\
				& & & 4,000& 2  & 7,200&  & &  4,000 &  1  &7,200 \\
				\\
		& & 60 & 0	& 1 & 0.13 	& &   60 & 0 & 1&  91 \\
		& & & 4,000&  2 &	7,200& && 4,000& 1 & 7,200\\	
		\\
		& & 80 & 		0 &  0.16& 20 & & 80 &   0 & 0.3&    54 \\
		& & & 4,000&   2& 7,200 &&&  4,000  & 2 & 4,290\\
		
\\
3 &  $[1,4]$ & 40 & 0 & 9 & 7,202 &  $[3,7]$ & 40 & 0 & 1 & 7,200  \\
 & & & 4,000& 3 & 7,200&  & & 4,000&  1 & 7,200\\
 \\
 
	& & 60 & 0	&	&500 &  & 60 & 0 & 9&	7,200 \\
	& & & 4,000& 38 & 7,200 & && 4,000 & 2 & 7,200\\

	\\
	& & 80 & 0&	1& 38& & 80 &0 &  3 & 160	\\
	& & & 4,000& 2.3 & 7,200  & & & 4,000&  2&  7,200\\
	\\
4 &  $[1,4]$ & 40 & 0 & 3,600& \multicolumn{1}{c}{--} &  $[3,7]$ & 40 & 0 & 1& \multicolumn{1}{c}{--}\\
 & & & 4,000& 32 & \multicolumn{1}{c}{--}&  & & 4,000&1  & \multicolumn{1}{c}{--}\\	
 \\
 & & 60 & 0	&4	&\multicolumn{1}{c}{--} &  & 60 & 0 &1 & \multicolumn{1}{c}{--}\\
	& & & 4,000&22 &\multicolumn{1}{c}{--} & && 4,000 & 1& \multicolumn{1}{c}{--}\\
	\\
	 & & 80 & 0	&	0.30& \multicolumn{1}{c}{--}&  & 80 & 0 & 4& 10\% \\
	& & & 4,000&50 & \multicolumn{1}{c}{--}& && 4,000 & 40 & 33\% \\
\hline
\end{tabular}\label{table:SolutionCPU}
\end{table}

\vspace{1mm}

Table~\ref{table:SolutionCPU} presents solution times (in seconds) using the \blue{SP} and DR models for different values of $M$ (maximum number of vehicles). We first observe that the DR can quickly solve all instances under the two ranges of the average number of passengers and all values of $f_s$ and $M$ and significantly faster than the \blue{SP} model. In contrast, the \blue{SP} fails to solve all of the SAA instances corresponding to instance 4 to optimality within the time limit, and terminates with either a large relative MIP (relMIP) gap (relMip:=$\frac{UB-LB}{LB}$, where UB is the best upper bound and LB is the linear programming relaxation-based lower bound obtained at termination) or without any feasible MIP solution, and thus no upper bound. Additionally, the \blue{SP}'s solution times differ under the two ranges of the average number of passengers and values of $f_s$ and $ M $.

\textcolor{black}{ Under $f_s=0$, the two models allocate all vehicles (i.e., $\sum_{s \in S}m_s^*=M)$.  The \blue{SP}'s solution times decrease as $M$ increases from 40 to 80. Consider instance 1, for example: \blue{SP}'s solution times decrease from 197 sec and 2,477 sec to 31 sec and 230 sec, respectively, under uncertain demand [1,4] and [3,7].  A possible explanation for this is that when $ M $ is large, we can satisfy passenger demand with any vehicle allocations, routing, and scheduling decisions. That is, we can allocate a larger number of vehicles in each region that may be sufficient to transport passengers to their last-mile stops via the direct routes to those steps (e.g., each route serves only one stop). When $M$ is small, vehicle allocations, routing, and scheduling decisions are subtle and difficult to optimize. }

\textcolor{black}{Under $f_s=4,000$, the two models allocate a subset of the $M$ vehicles to minimize the total cost function (see Table \ref{table:OptAllocAndRoutes}). The larger \blue{SP} solution times indicate that \blue{SP}'s routing and scheduling decisions are subtle and difficult to optimize in this case.}

\subsection{\textbf{Analysis of optimal solutions}}\label{sec5;optimalsolutions}

\noindent \textcolor{black}{In this section, we compare the DR and \blue{SP} optimal vehicle sizing and allocation decisions and their out-of-sample performance using the same settings as in Section~\ref{sec5.2:CPU}. Under zero vehicle fixed cost (i.e., $f_s=0$), the two models have similar performance for all three instances. \textcolor{black}{We show such results for instance 1 in Table~\ref{table:InSamplef=0} in \ref{Appex:InSampleZeroF}}. Therefore, in this section we mainly compare the optimal sizing and allocation decisions under $f_s=4,000$. For presentation brevity and illustrative purposes, we fix $M=60$ and present results for instances 1--3 \blue{and NYC}, as the SAA-\blue{SP} can solve all such instances. }

\vspace{1mm}

\textcolor{black}{First, we analyze optimal vehicle sizing and allocation decisions yielded by the DR and \blue{SP} models, which are presented in Table~\ref{table:OptAllocAndRoutes}.  From this table, we first observe that, \blue{for instances 1-3}, both models allocate a higher number of vehicles to each service region under demand [3,7] than [1,4]. This makes sense, as a larger batch of passengers arrives in each train in the former case. Second, we observe that by incorporating ambiguity in the number of passengers arriving at each station after each train in each service region, the DR models always allocate more vehicles than the \blue{SP}.   As we show next, allocating more vehicles results in a better quality of service in terms of passenger waiting and riding  times, but a higher vehicle fixed cost.}

\begin{table}[t!]

\center 
 \footnotesize
   \renewcommand{\arraystretch}{0.9}
\caption{Optimal sizing and allocation decisions with $f_s=4,000$ and $M=60$} 
\begin{tabular}{cclclllllllllllllllllllll}
\hline
\textbf{Inst}&\textbf{Range}& \textbf{Model} & $\pmb{\sum_{s \in S} m_s^*}$ & $\pmb{m_1^*}$ &$\pmb{m_2^*}$ & $\pmb{m_3^*}$ &  $\pmb{m_4^*}$\\
\hline
1 & [1,4]& DR & 28 & 4& 7 & 7& 10\\
 &  & \blue{SP}  & 16 & 2 & 4 & 4 & 6\\
 \\
 & [3,7] & DR & 59& 11 & 16 & 12 & 20 \\ 
 &        &  \blue{SP}& 36& 7& 9 & 8 & 12 \\
 \\
 2 & [1,4] & DR & 22 & 2& 6 & 6& 8\\
 &  & \blue{SP} & 17 & 2 & 4 & 5 & 6 \\
  \\
  & [3,7] & DR & 50 & 7 & 11 & 12 & 19 \\
  &         & \blue{SP}& 28 & 5& 7 & 7 & 10\\
  \\
  \blue{NYC} &  & \blue{DR} & \blue{21} &  \blue{3}& \blue{6}& \blue{7}& \blue{5} \\
            &   & \blue{SP} &  \blue{10} &  \blue{2}& \blue{3}& \blue{3}& \blue{2}\\
            \\
  \hline
\textbf{Inst}&\textbf{Range}& \textbf{Model} &   $\pmb{\sum_{s \in S} m_s^*}$& $\pmb{m_1^*}$ &$\pmb{m_2^*}$ & $\pmb{m_3^*}$ &  $\pmb{m_4^*}$ & $\pmb{m_5^*}$\\
\hline 
 3 & [1, 4]& DR & 31 & 2 & 6 & 6 &8& 9 \\ 
 	&			&\blue{SP}& 20 & 2&4&4&5&5\\
 	\\
 	& [3, 7] & DR & 60 & 8& 10 & 11& 16 & 15 \\
 	&			& \blue{SP}& 42& 5& 7& 7& 11& 12\\
\hline
\end{tabular}\label{table:OptAllocAndRoutes}
\end{table}

\vspace{1mm}

Next, we analyze the in-sample performance of the optimal vehicle sizing and allocation decisions of DR and \blue{SP} under ``perfect information'' (known distributions) and out-of-sample performance with ``misspecified distribution information.'' 
Specifically, we simulate the optimal solutions of DR and \blue{SP} using the following two sets of $N^\prime=10,000$ out-of-sample data $n_{i,j,s}^{\mbox{\tiny 1}}, \ldots, n_{i,j,s}^{\mbox{\tiny N}^\prime} $ , for all $s \in S, i \in I_s, j \in J_s$.
\color{black}
\begin{enumerate}\itemsep0em
\item \textit{Perfect information}: We use the same parameter settings in Sections~\ref{sec5.1:Expsetup}-\ref{sec5.2:CPU} that we use to generate the $N$ in-sample data to generate the $N^\prime$ data from LogN. This is to simulate the in-sample performance.
\item \textit{Misspecified distribution information}: To simulate the out-of-sample performance of the DR and \blue{SP} optimal solutions when the in-sample data are biased, we keep the same mean $\mu_{i,j,s}$, standard deviation $\sigma_{i,j}$, and range values of $n$ as in the in-sample data, but we vary the distribution type of $n_{i,j,s}$ to generate the $N^\prime$ data. Specifically, we follow a Uniform distribution to generate realizations $n_{i,j,s}^{\mbox{\tiny 1}}, \ldots, n_{i,j,s}^{\mbox{\tiny N}^\prime} $ , for all $s \in S, i \in I_s, j \in J_s, n^\prime=1,\ldots, 10,000$.  We follow the same standard statistical method as in prior distributionally robust literature to design the parameters of the joint Uniform distribution with varying levels of correlations, while keeping the mean and support of the $N^\prime$ out-of-sample data the same as those of the $N$ in-sample data.
\end{enumerate}
\color{black}

We evaluate the out-of-sample performance of the optimal \blue{SP} and DR solutions as follows. First, we fix the optimal first-stage allocation decisions ($m_s^*$, for all $s \in S$) in the \blue{SP} model. Then we simulate the second-stage recourse problem with dynamic routing using the $N^\prime$ data to compute passenger waiting and riding time costs.

\vspace{1mm}

\blue{Table~\ref{table:InSample} presents the means and quantiles of the total cost (TC), second-stage cost (2nd-stage), total waiting time per region (TWT), and total riding time per region (TRT) yielded by the optimal solutions of the DR and SP for insatnces 1--3 under perfect distributional information (i.e., LogN distribution).} \blue{Table~\ref{table:InSampleNYC} presents the results for the NYC instance}. Clearly, by allocating more vehicles in each region, the DR results in a higher vehicle fixed (one-time) cost, and thus a higher total cost, than the \blue{SP}. However, the DR also results a in significantly lower second-stage cost and, in particular, substantially lower waiting time on average and at all quantiles, and hence offers a better quality of service and greater passenger satisfaction. For example, consider instance 2 and 

\vspace{1mm}

\begin{landscape}
\begin{table}[t!]
\center 
 \footnotesize
   \renewcommand{\arraystretch}{0.3}
\caption{In-sample performance of optimal sizing and allocation decisions for Inst 1--3 under perfect distributional information, $f_s=4000$.} 
\begin{tabular}{cclllllllllllllllllllllll}
\hline
                  & 						&							&							&  & &\multicolumn{4}{c}{\textbf{TWT for each region}}	& & 	\multicolumn{5}{c}{\textbf{TRT for each region}} \\ \cline{7-11} \cline{13-17}
\textbf{Inst}&\textbf{R}& \textbf{Metric}  &\textbf{Model}  & \textbf{TC} & \textbf{2nd-stage}&  1  & 2 & 3 & 4 & 5 & & 1 & 2 & 3 & 4 & 5\\
\hline
1 & [1,4]& Mean & DR & 117,944 & 5,944  &  354 & 68 & 104 & 111 &&& 708 & 1,043 & 1,129 & 1,793 \\
  & 		& &  \blue{SP} & 88,097 & 24,097 &2,792  & 2,102 & 2,007 & 2,826  & && 604 & 1,061& 1,140&  1,837\\
  \\
 	&      & Median & DR & 117,836 & 5,836 & 320& 50& 100& 110 &&& 708 & 1,045& 1,128& 1,795    \\
 	&		&				& \blue{SP}& 87,866 & 23,866 & 2,770& 2,090 & 1,980 & 2,770 & &&604 & 1,066 &1,140& 1,836 \\
 	\\
 	&    & 75\%-q & DR & 118,275 & 6,275 & 400& 80& 120& 130 & && 732& 1,077& 1,164& 1,842 \\
 	& 	 &  						& \blue{SP} & 90,959 & 27,959 & 3,110&  2,420& 2,340& 3,230 & && 614 & 1,086 & 1,174& 1,885\\
 	\\
	&    & 95\%-q & DR &  119,440 & 7,440 & 670 & 190& 170 & 180 & && 760& 1,121& 1,218& 1,921 \\
	& 	& 							& \blue{SP} & 95,817 & 31,817 & 3,610& 2,980& 2,960&  3,910 & && 625 & 1,110& 1,213 & 1,949\\
	\\
	& [3, 7] & Mean & DR  &245,399&	9,399&   26 & 19 & 50 & 9 &&& 1,779 & 2,155 & 2,171 & 3,087\\
	&			& 			& \blue{SP}  & 176,342	&32,342  & 1,681& 2,686&  3,238 & 3,910& & & 1,811 & 2,162& 2,211& 3,129\\
	\\
 	& & Median & DR &245,353	& 9,353 & 20 & 10 & 40 & 10 & && 1,777& 2,156& 2,171& 3,089 \\
 	&  &           & \blue{SP} & 175,619	&31,619	& 1,600& 2,580& 3,150& 3,820 &&& 1,809& 2,164& 2,273 & 3,134\\
 \\
  	&    & 75\%-q & DR & 245,794	& 9,794 & 40 & 30 & 70 & 10  && & 1,844 & 2,232& 2,244 & 3,174\\	
  	&	 & 			 & \blue{SP} & 181,013	&37,013& 2,010 & 3,280& 3,820& 4,600 & && 1,879& 2,236& 2,273& 3,205\\
  	\\
  	&    & 95\%-q & DR &  246,447	&10,447 & 70 & 60 & 110 & 30 &&& 1,932& 2,321& 2,359& 3,295\\
    	&	 & 			 & \blue{SP} & 189,398	 & 45,398 & 2,700& 4,200& 4,890& 5,950 && & 1,974& 2,313& 2,355& 3,276 \\		  	
    	\\\
 2 & [1,4] & Mean &  DR &94,617 &	6,617& 613 & 139 & 215 & 301 && & 532& 921& 1,211& 1,418 \\   
  &			&         & \blue{SP} & 78,238	 &10,238 & 485 & 1,098&  348&  1,083&&& 514 & 968& 1,271& 1,454\\
  \\
  & 			& Median & DR & 94,421	& 6,421& 590 & 100& 200& 280&&& 530& 922&1,209& 1,420\\
  &			& 				& \blue{SP} & 78,049&	10,049 & 470 & 1,070 & 330 & 1050 && & 514& 968& 1,268& 1,459 \\
  \\
  &    & 75\%-q & DR & 95,137	& 7,137 & 680 & 170& 260& 350 &&& 551 & 947& 1,250& 1,469 \\
  &   &				& \blue{SP} & 79,282& 	11,282 & 540 & 1,290& 400& 1,240 &&& 532 & 995& 1,313& 1,502\\ 
  \\
  	&    & 95\%-q &  DR  & 96,619	& 8,619 & 860& 390& 390& 460 && & 583 & 979& 1,317& 1,540\\   
  	&		&				& \blue{SP} & 81,439	& 13,439&  680 & 1,650 & 570& 1,550 & && 557 & 1,034 & 1,384&  1564\\
  	\\
  	& [3,7] & Mean & DR & 207,918 &	7,918 & 175 & 84 & 97 & 32 & &&  1,299 & 1,500 & 2,100 & 2,241 \\
  	&			& 			& \blue{SP} & 144,785 & 	3,278& 1,194 & 3,357 & 3,070 & 5,123 && & 1,362 & 1,625 & 2,176 & 2,134 \\
  	\\
  	& & Median & DR &207,836	&7,836 & 220 & 70 & 90 & 30 &&& 1,300 &   1,498 & 2,101 & 2,237\\
  	& & 				&\blue{SP} & 144,200	& 32,200 & 1,160 & 3,290 & 2,970 & 5,030 && & 1,358& 1,628 & 2,177& 2,137\\
  	\\
  	& & 75\%-q &  DR & 208,441 & 	8,441 & 220& 120 & 130 & 50 &&& 1,351& 1,553& 2,171& 2,326 && \\ 
  	& &   		& \blue{SP} & 149,331	& 	37,331 & 1,490 & 3,890 & 3,620& 5,920 & && 1,416 & 1,664 & 2,239 & 2,172\\
  	\\
  	& & 95\%-q & DR & 209,384	 & 9,384 & 320 & 200 & 200 & 80 & && 1424 & 1,623&  2,302& 2,435\\
  	& & 				& \blue{SP}& 157,029	&45,029 & 2,000 & 4,760&4,710& 7,170 &&& 1,490& 1,714& 2,318 & 2,227\\
  	\\
  	\\
  3 & [1,4] & Mean & DR & 	132,669 & 	8,669 & 853 & 104 & 125 & 232 & 190 & & 475 & 953 & 1,229 & 1,405 & 1,596 \\
  & & 		&   \blue{SP}  & 102,232 & 	22,232 & 513 & 1,004 & 1,211 & 2,751 & 2,752  & & 522 & 1,000 & 1,326 & 1,320 & 1,599 \\
  \\
  & & Median& DR & 132,460	& 8,460 & 840 & 80 & 90 & 210 & 190 & & 476 & 955 & 1,226 & 1,407 & 1,596  \\
   &  &  & \blue{SP} & 101,955&	21,955& 510 & 980 & 1,170 & 2,730 & 2,700 & &522 & 1,001 & 1,328 & 1,320 & 1,604\\
   \\ 
  & &  75\%-q &  DR & 133,344	& 9,344 & 940 & 140 & 170 & 280 & 220 & & 492 & 983 & 1,272 & 1,450& 1,647\\ 
  & &   			& \blue{SP} & 104,734	& 24,734 &  580 & 1,180 & 1,420 & 3,130 & 3,100 & & 541 & 1,026 & 1,372 & 1,344 & 1,631 \\
  \\
&  & 95\%-q  & DR & 135,111 &	11,111 & 1,110 & 270 & 370& 440 & 310 && 517& 1,020& 1,339 &1,524 &1,711 \\
& & 				& \blue{SP} & 109,032&	29,032 & 720& 1,510& 1,830 & 3,700 & 3,700 && 570& 1,067& 1,426 & 1,381& 1,668\\
\\
& [3, 7] & Mean & DR & 252,442	& 12,442 & 84 & 107 & 96 & 184 & 648  && 1,299 & 1,571 & 2,221 & 2,293 & 3,155 \\
&			& 		& \blue{SP} &203,413	& 35,413 & 1,005 & 1,683& 3,284 &3,530 & 2,940  && 1,366& 1,677 & 2,154& 2,136& 3,195\\
\\ 
& 			&  Median &DR &  252,740	& 12,740 & 80 & 100 & 80 & 130 & 600 && 1,299 & 1,571 & 2,221 & 2,292 & 3,154 \\
&			& 			& \blue{SP} & 202,673	& 34,673 & 940& 1,610& 3,200& 3,440& 2,880& & 1,363& 1,679& 2,158& 2,136& 3,197 \\
\\
  & &  75\%-q  & DR & 254,142	 & 14,142 & 110 & 140 & 120 & 190 & 790 && 1,348 &  1,623 & 2,293& 2,375 & 3,243\\
  & & 			& \blue{SP} & 208,480	& 40,480 & 1,270& 2,060& 3,880& 4,180& 3,440& & 1,419& 1,732& 2,215& 2,179& 3,275\\
  \\
  & & 95\%-q & DR & 253,026 &	13,026 & 170 & 220& 250& 340& 1180& & 1,425& 1,701& 2,417& 2,499& 3,364\\
  &&				&	 \blue{SP} & 217,696	& 49,696& 1,850& 2,790& 4,940 & 5,300& 4,360&  & 1,497& 1,805& 2,283& 2,248& 3,383\\
\hline
\end{tabular}\label{table:InSample}
\end{table}
\end{landscape}

\begin{table}[t!]
\center 
 \footnotesize
 \color{black}
   \renewcommand{\arraystretch}{0.3}
\caption{\blue{In-sample performance of optimal sizing and allocation decisions for the NYC instance under perfect distributional information, $f_s=4000$.}} 
\begin{tabular}{cclllllllllllllllllllllll}
\hline
               & & & & \multicolumn{4}{c}{\textbf{TWT for each region}}	& & 	\multicolumn{4}{c}{\textbf{TRT for each region}} \\ \cline{5-8} \cline{10-14}
 \textbf{Metric}  &\textbf{Model}  & \textbf{TC} & \textbf{2nd-stage}& 1 & 2 & 3 & 4  &&  1& 2 & 3 & 4 \\
 \hline 
Mean	&	DR	&	86,018	&	2,018	&	142	&	92	&	31	&	16	&	&	223	&	473	&	544	&	212	\\
	&	SP	&	50,210	&	10,210	&	817	&	1,243	&	1,445	&	658	&	&	325	&	628	&	630	&	296	&\\
	\\
Median	&	DR	&	85,982	&	1,982	&	140	&	90	&	30	&	10	&	&	221	&	472	&	538	&	211	\\
	&	SP	&	49,341	&	9,341	&	730	&	1,160	&	1,270	&	580	&	&	319	&	625	&	625	&	292	\\
	\\
75\%-q	&	DR	&	86,259	&	2,259	&	170	&	110	&	40	&	30	&	&	224	&	507	&	591	&	237	\\
	&	SP	&	52,374&	12,374	&	1,010	&	1,510	&	1,810	&	820	&	&	367	&	691	&	681	&	335	\\
	\\
95\%q	&	DR	&	86,727	&	2,727	&	210	&	150	&	70	&	50	&	&	278	&	549	&	662	&	278	\\
	&	SP	&	58,652&	18,652	&	1,590	&	2,170	&	3,050	&	1310	&	&	443	&	788	&	788	&	393	\\
\hline
\end{tabular}\label{table:InSampleNYC}
\end{table}

\noindent range [1,4]. By allocating 22 and 17 vehicles, respectively, the DR and \blue{SP} result in 88,000 and 68,000 vehicle fixed costs. However, the average second-stage cost and total waiting time (overall regions) of the DR are 55\% and 137\% lower than those of the \blue{SP}, respectively. It is not surprising that the total cost of the objective function in \blue{SP} is lower than that in DR, since we assume perfect information about the exact demand distribution in this case.

\color{black}
Table \ref{table:OutofSample} presents the means and quantiles of the total and second stage costs from the optimal solutions of DR and \blue{SP} under misspecified distributional information. For total cost, the out-of-sample results in Table \ref{table:OutofSample} shows that there is still no clear winner \blue{for instances 1--3}, while DR has significantly lower costs in some worst-case instances, for example in instances 2 with larger demands of $R = [3,7]$ (average TC of DR and \blue{SP} are \$215,208  and \$226,406, respectively). For second-stage costs, DR out-performs \blue{SP} significantly, since DR is designed to be robust against worst-case demand distributions \noindent in the second stage. \blue{Interestingly, the DR model provides significantly lower total and second-stage costs for the NYC instance compared to that of the SP model, which may be due to the much stronger demand uncertainty in the NYC instance constructed using real data. These simulation results demonstrate the value of incorporating both uncertainty and ambiguity into fleet sizing, allocation, routing, and scheduling models. }

\color{black}

\subsection{\textbf{Sensitivity Analysis}}\label{sec5.4:sensitivity}
\noindent \textcolor{black}{In this section, we study the sensitivity of DR and \blue{SP} solutions to various input parameter settings. For illustrative purposes and presentation brevity, we consider instance 1 for this experiment (we observe similar results for instances 2-4 and the NYC instance). For each experiment, we simulate the optimal solutions of the two-stage DR and two-stage \blue{SP} (called TSM henceforth) under a sample of 10,000 scenarios of the number of passengers demanding last-mile service.}

\vspace{1mm}
\begin{table}[t!]
  \centering
  \footnotesize
  \caption{Out-of-sample performance of optimal sizing and allocation decisions under misspecified distribution, $f_s=4000$.}
  \hspace*{-1cm}
    \begin{tabular}{rrrlrrrrrrrr}
    \toprule
    \multicolumn{1}{l}{\textbf{Instance}} & \multicolumn{1}{l}{\textbf{R}} & \multicolumn{1}{l}{\textbf{Metric}} & \textbf{Model} & \multicolumn{1}{l}{\textbf{TC}} & \multicolumn{1}{l}{\textbf{2nd-Stage}} & \multicolumn{1}{c}{\textbf{Instance}} & \multicolumn{1}{l}{\textbf{R}} & \multicolumn{1}{l}{\textbf{Metric}} & \multicolumn{1}{l}{\textbf{Model}} & \multicolumn{1}{l}{\textbf{TC}} & \multicolumn{1}{l}{\textbf{2nd-Stage}} \\
    \midrule
    1     & \multicolumn{1}{l}{[1,4]} & \multicolumn{1}{l}{Mean} & DR    & 126,262 & 14,262 & 3     & \multicolumn{1}{l}{[1,4]} & \multicolumn{1}{l}{Mean} & \multicolumn{1}{l}{DR} & 144,217 & 20,217 \\
          &       &       & \blue{SP}   & 116,912 & 52,912 &       &       &       & \multicolumn{1}{l}{\blue{SP}} & 139,367 & 43,367 \\
          &       & \multicolumn{1}{l}{Median} & DR    & 123,791 & 11,791 &       &       & \multicolumn{1}{l}{Median} & \multicolumn{1}{l}{DR} & 148,987 & 24,987 \\
          &       &       & \blue{SP}   & 119,346 & 55,346 &       &       &       & \multicolumn{1}{l}{\blue{SP}} & 148,516 & 52,515 \\
          &       & \multicolumn{1}{l}{75\%-q} & DR    & 138,444 & 26,444 &       &       & \multicolumn{1}{l}{75\%-q} & \multicolumn{1}{l}{DR} & 156,193 & 32,193 \\
          &       &       & \blue{SP}   & 145,899 & 81,899 &       &       &       & \multicolumn{1}{l}{\blue{SP}} & 162,518 & 66,518 \\
          &       & \multicolumn{1}{l}{95\%-q} & DR    & 138,444 & 26,444 &       &       & \multicolumn{1}{l}{95\%-q} & \multicolumn{1}{l}{DR} & 156,193 & 32,193 \\
          &       &       & \blue{SP}   & 145,899 & 81,899 &       &       &       & \multicolumn{1}{l}{\blue{SP}} & 162,518 & 66,518 \\
          & \multicolumn{1}{l}{[3,7]} & \multicolumn{1}{l}{Mean} & DR    & 257,150 & 21,150 &       & \multicolumn{1}{l}{[3,7]} & \multicolumn{1}{l}{Mean} & \multicolumn{1}{l}{DR} & 282,210 & 42,210 \\
          &       &       & \blue{SP}   & 216,848 & 68,848 &       &       &       & \multicolumn{1}{l}{\blue{SP}} & 314,953 & 146,953 \\
          &       & \multicolumn{1}{l}{Median} & DR    & 252,486 & 16,486 &       &       & \multicolumn{1}{l}{Median} & \multicolumn{1}{l}{DR} & 272,873 & 32,873 \\
          &       &       & \blue{SP}   & 210,143 & 62,143 &       &       &       & \multicolumn{1}{l}{\blue{SP}} & 293,358 & 125,358 \\
          &       & \multicolumn{1}{l}{75\%-q} & DR    & 272,729 & 36,729 &       &       & \multicolumn{1}{l}{75\%-q} & \multicolumn{1}{l}{DR} & 321,239 & 81,239 \\
          &       &       & \blue{SP}   & 254,225 & 106,225 &       &       &       & \multicolumn{1}{l}{\blue{SP}} & 441,640 & 273,640 \\
          &       & \multicolumn{1}{l}{95\%-q} & DR    & 277,311 & 41,311 &       &       & \multicolumn{1}{l}{95\%-q} & \multicolumn{1}{l}{DR} & 329,795 & 89,795 \\
          &       &       & \blue{SP}   & 269,561 & 121,561 &       &       &       & \multicolumn{1}{l}{\blue{SP}} & 461,111 & 293,111 \\
    2     & \multicolumn{1}{l}{[1,4]} & \multicolumn{1}{l}{Mean} & DR    & 102,883 & 14,883 &     \multicolumn{1}{r}{\blue{NYC}} &       &    \multicolumn{1}{l}{\blue{Mean} }  &  \multicolumn{1}{l}{\blue{DR}}     & \blue{87,093}       & \blue{3,093}\\
          &       &       & \blue{SP}   & 94,451 & 26,451 &       &       &     & \multicolumn{1}{l}{\blue{SP}} & \blue{147,923}      & \blue{107,923} \\
          &       & \multicolumn{1}{l}{Median} & DR    & 106,262 & 18,262 &       &      &  \multicolumn{1}{l}{\blue{Median}}      &   \multicolumn{1}{l}{\blue{DR}}     & \blue{87,165}      &  \blue{3,165}\\
          &       &       & \blue{SP}   & 101,276 & 33,276 &       &       &       &   \multicolumn{1}{l}{\blue{SP}}     &   \blue{114,264}    & \blue{103,443} \\
          &       & \multicolumn{1}{l}{75\%-q} & DR    & 110,987 & 22,987 &       &       &  \multicolumn{1}{l}{\blue{75\%-q}}     &    \multicolumn{1}{l}{\blue{DR}}    & \blue{88,210}      & \blue{4,210} \\
          &       &       & \blue{SP}   & 108,761 & 40,762 &       &       & &    \multicolumn{1}{l}{\blue{SP}}       & \blue{182,414} &\blue{168,191}    \\
          &       & \multicolumn{1}{l}{95\%-q} & DR    & 110,987 & 22,987 &       &       &    \multicolumn{1}{l}{\blue{95\%-q}}    &     \multicolumn{1}{l}{\blue{DR}}   &   \blue{90,838}    &  \blue{6,838}\\
          &       &       & \blue{SP}   & 108,761 & 40,762 &       &     && \multicolumn{1}{l}{\blue{SP}}  &    \blue{26,2214}   &       \blue{222,791}        \\
          & \multicolumn{1}{l}{[3,7]} & \multicolumn{1}{l}{Mean} & DR    & 215,208 & 15,208 &       &       &       &       &       &  \\
          &       &       & \blue{SP}   & 226,406 & 110,406 &       &       &       &       &       &  \\
          &       & \multicolumn{1}{l}{Median} & DR    & 210,996 & 10,995 &       &       &       &       &       &  \\
          &       &       & \blue{SP}   & 223,017 & 107,017 &       &       &       &       &       &  \\
          &       & \multicolumn{1}{l}{75\%-q} & DR    & 230,932 & 30,932 &       &       &       &       &       &  \\
          &       &       & \blue{SP}   & 298,678 & 182,678 &       &       &       &       &       &  \\
          &       & \multicolumn{1}{l}{95\%-q} & DR    & 233,606 & 33,605 &       &       &       &       &       &  \\
          &       &       & \blue{SP}   & 313,528 & 197,528 &       &       &       &       &       &  \\
          \bottomrule
    \end{tabular}%
  \label{table:OutofSample}%
\end{table}%

\noindent \textbf{Impact of variability in demand/demand ranges}
\vspace{1mm}

\textcolor{black}{First, we analyze the DR and \blue{SP} solutions' sensitivity to the variability and volume of the number of passengers arriving at each service region with each train. In addition to the base demand range (Range 1, [1,4]), we consider four additional ranges: [1,6], [1,8], [4,7], and [6,9]. In [1,6] and [1,8],  we increase the variability (difference between the lower and upper bounds) of the number of $n_{i,j,s}$ from 3 to 5 and 7, respectively. In [4,7] and [6,9], we keep the difference between the upper and lower bounds of $n_{i,j,s}$ as in the base range (i.e., 3), and increase the demand volume (lower and upper bounds) to [4,7] and [6, 9]. We keep cost parameters as in the base case settings, i.e., $f_s=4,000$, $\beta_w=2$, and $\beta_r=1$.}

\vspace{1mm}

Figure~\ref{Fig:Sens1} presents the optimal fleet size (i.e., $\sum \limits_{s \in S} m_s^*$) and the average second-stage cost (waiting+riding time costs) as a function of the demand range. It is quite apparent from Figure~\ref{Fig:Sens1} that both models tend to allocate more vehicles under higher variability and volume of demand. By allocating more vehicles, the DR mitigates the increase in passengers' variability and volume by maintaining significantly lower waiting and riding time costs.

      \begin{figure}
     \begin{subfigure}[b]{0.5\textwidth}
 \centering
        \includegraphics[width=\textwidth]{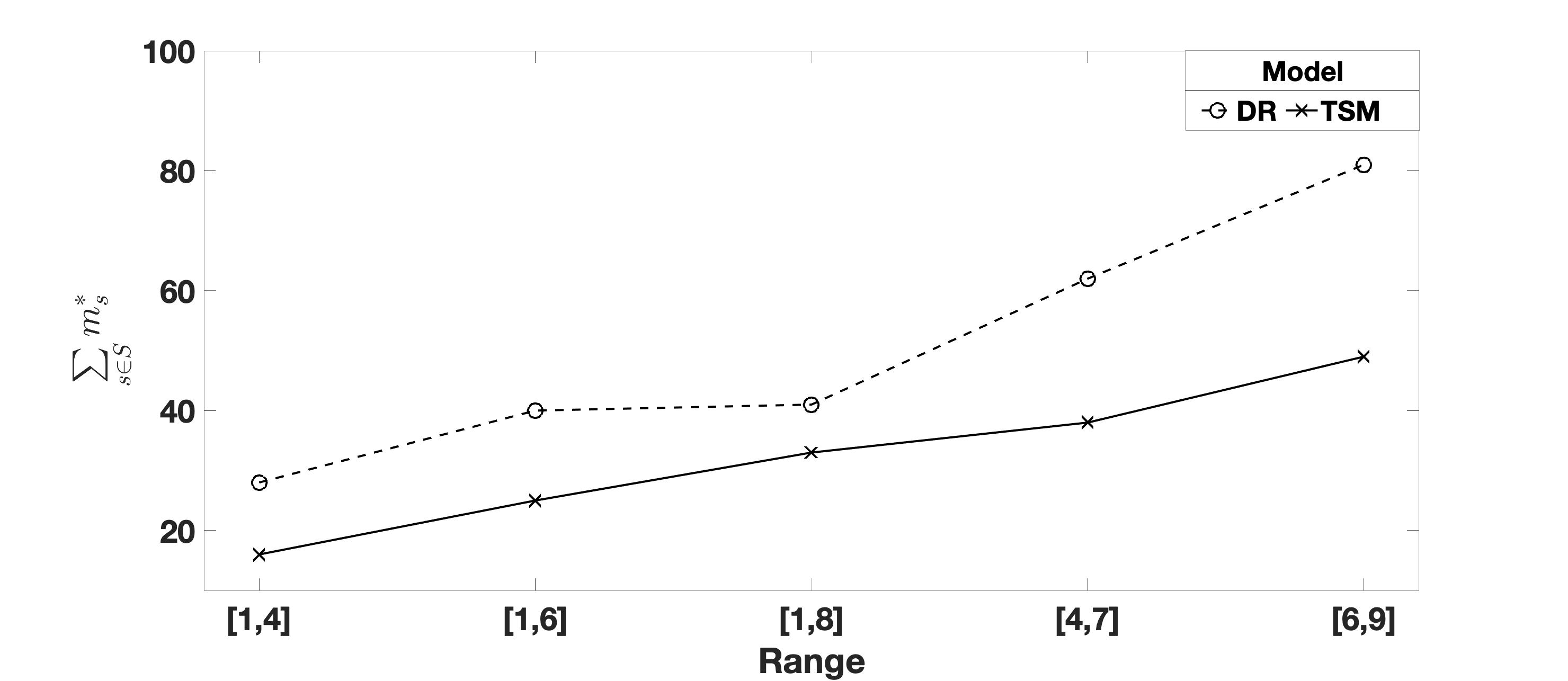}
        \caption{Optimal fleet size vs. Demand range}
        \label{Fig1a:Q_open}
    \end{subfigure}%
    \begin{subfigure}[b]{0.5\textwidth}
            \includegraphics[width=\textwidth]{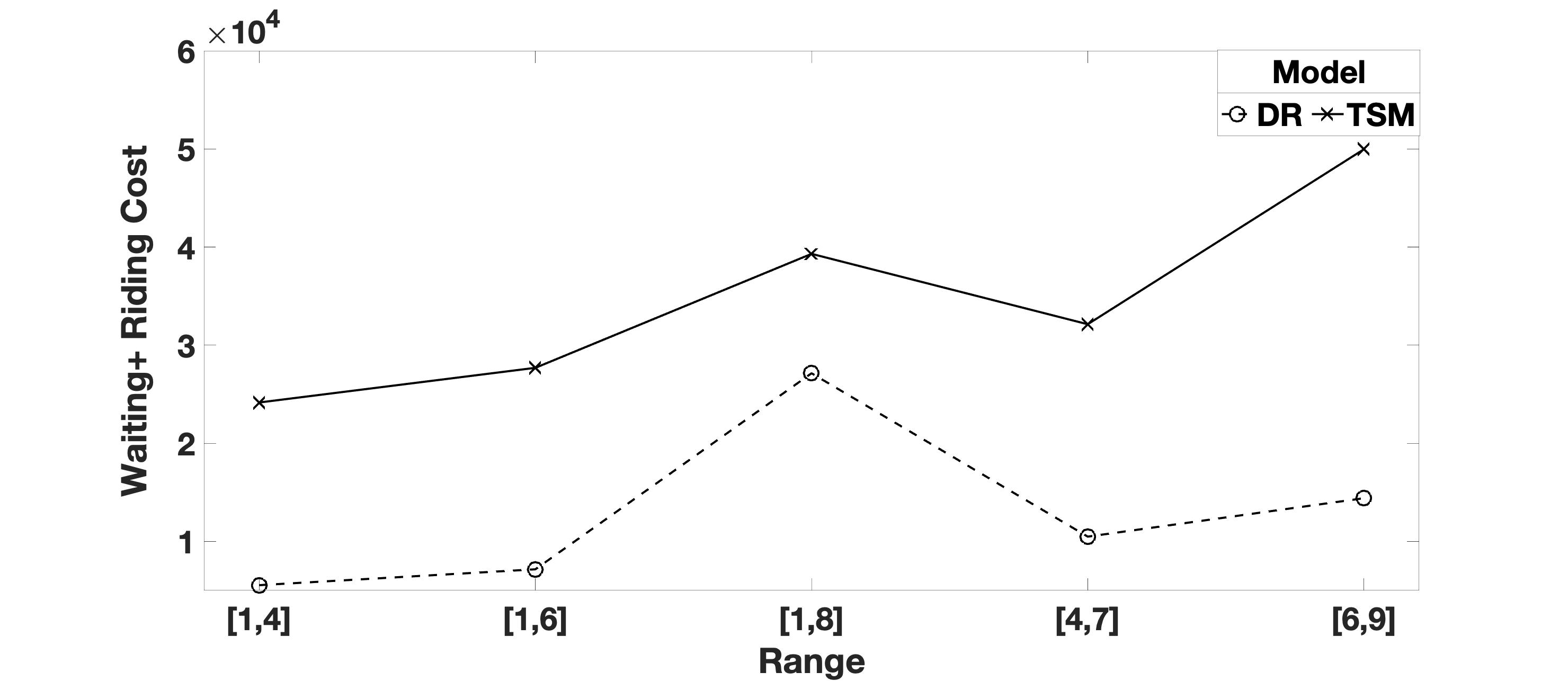}
      \caption{Average 2nd-stage cost vs. Demand range}
      \label{Fig1b:Q_total}
    \end{subfigure}%
        \caption{Optimal fleet size ($\sum \limits_{s \in S} m_s^*$) and the associated second-stage cost (waiting+riding time costs) under different demand range. TSM is the two-stage SP model }\label{Fig:Sens1}
    \end{figure}
    

\vspace{1mm}

\noindent \textbf{Impact of cost parameters}
\vspace{1mm}

Next, we analyze the DR and \blue{SP} solutions' sensitivity to the cost parameters. We fix the demand range to [1,4] and [4,7] as examples of a low and relatively high volume of passengers, and vary $f_s \in \{ 4,000, 7,000, 10,000 \} $ and $(\beta_w, \beta_r) \in \{ (1,2), (8, 4), (32, 16)  \}$.  Figures~\ref{Fig:Sens2} and \ref{Fig:Sens3} present the optimal fleet size, $\sum \limits_{s \in S}m_s^*$, and the associated second-stage (waiting+riding time costs) cost as a function of $f_s$ and $(\beta_w, \beta_r) $ under demand range $[1,4]$ and $[1,7]$, respectively. 

\vspace{1mm}

 \begin{figure}[t!]
     \begin{subfigure}[b]{0.5\textwidth}
 \centering
        \includegraphics[width=\textwidth]{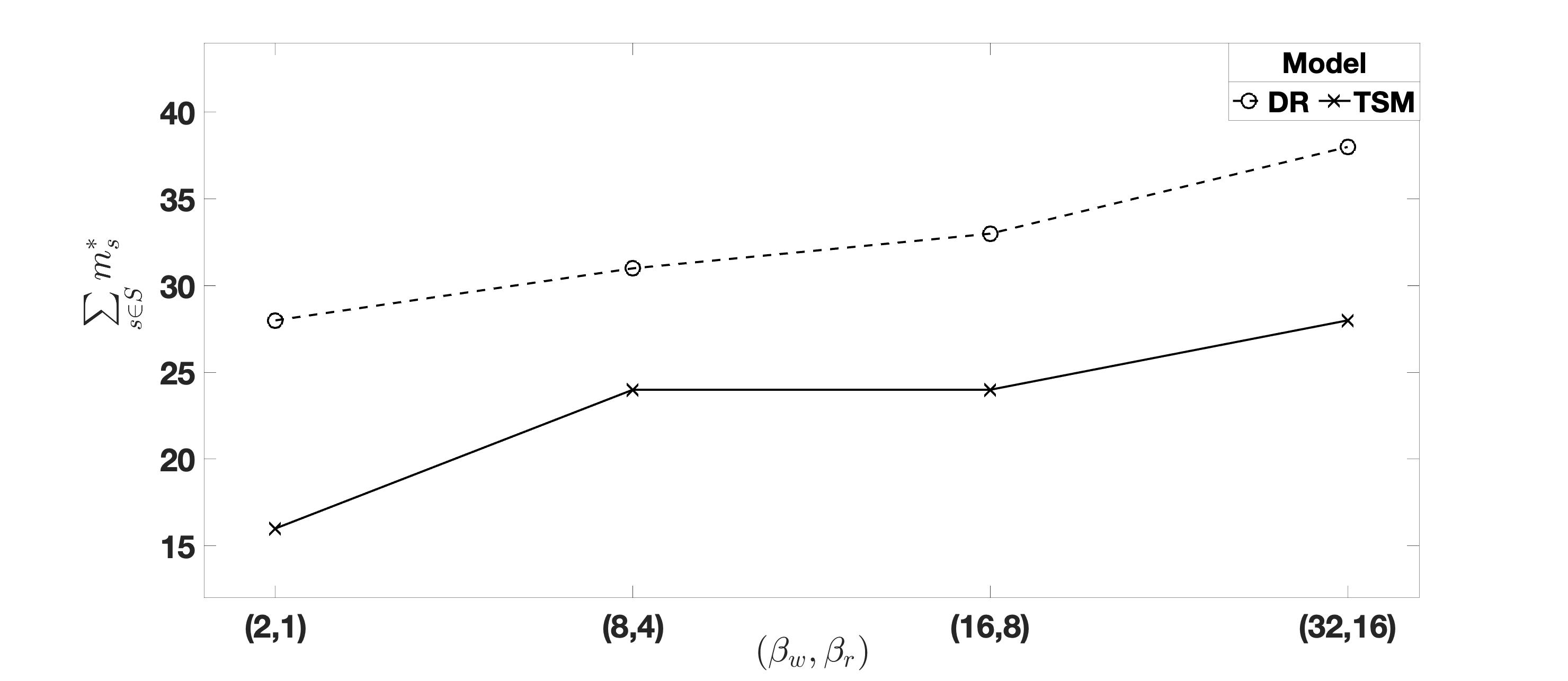}
        \caption{Optimal fleet size  vs. ($\beta_w, \beta_r$), $f_s=4,000$}
        \label{Fig1a:M_4000_1_4}
    \end{subfigure}%
    \begin{subfigure}[b]{0.5\textwidth}
            \includegraphics[width=\textwidth]{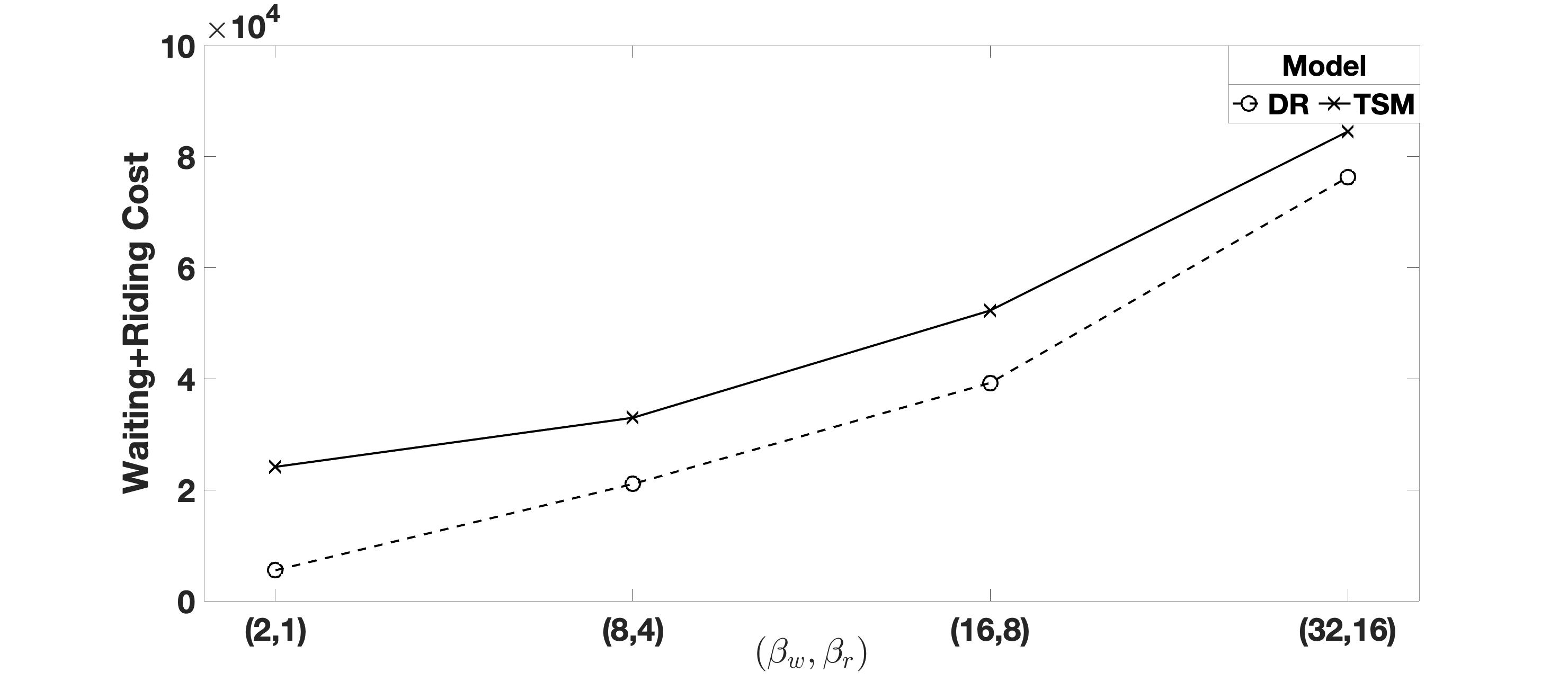}
      \caption{Average 2nd-stage cost vs. ($\beta_w, \beta_r$), $f_s=4,000$}
      \label{Fig1b:Cost_40000_1_4}
    \end{subfigure}%
    
     \begin{subfigure}[b]{0.5\textwidth}
 \centering
        \includegraphics[width=\textwidth]{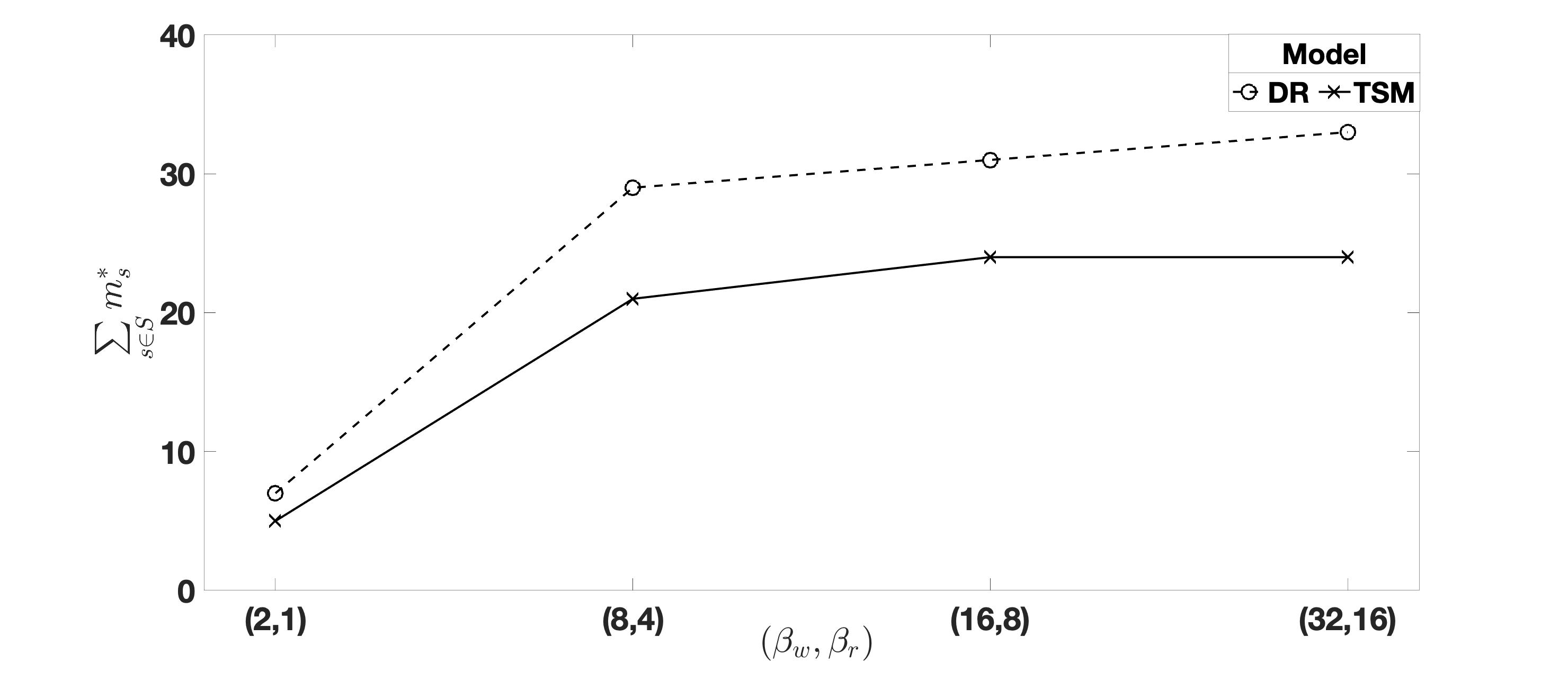}
        \caption{Optimal fleet size vs. ($\beta_w, \beta_r$), $f_s=7,000$}
        \label{Fig1a:M_7000_1_4}
    \end{subfigure}%
    \begin{subfigure}[b]{0.5\textwidth}
            \includegraphics[width=\textwidth]{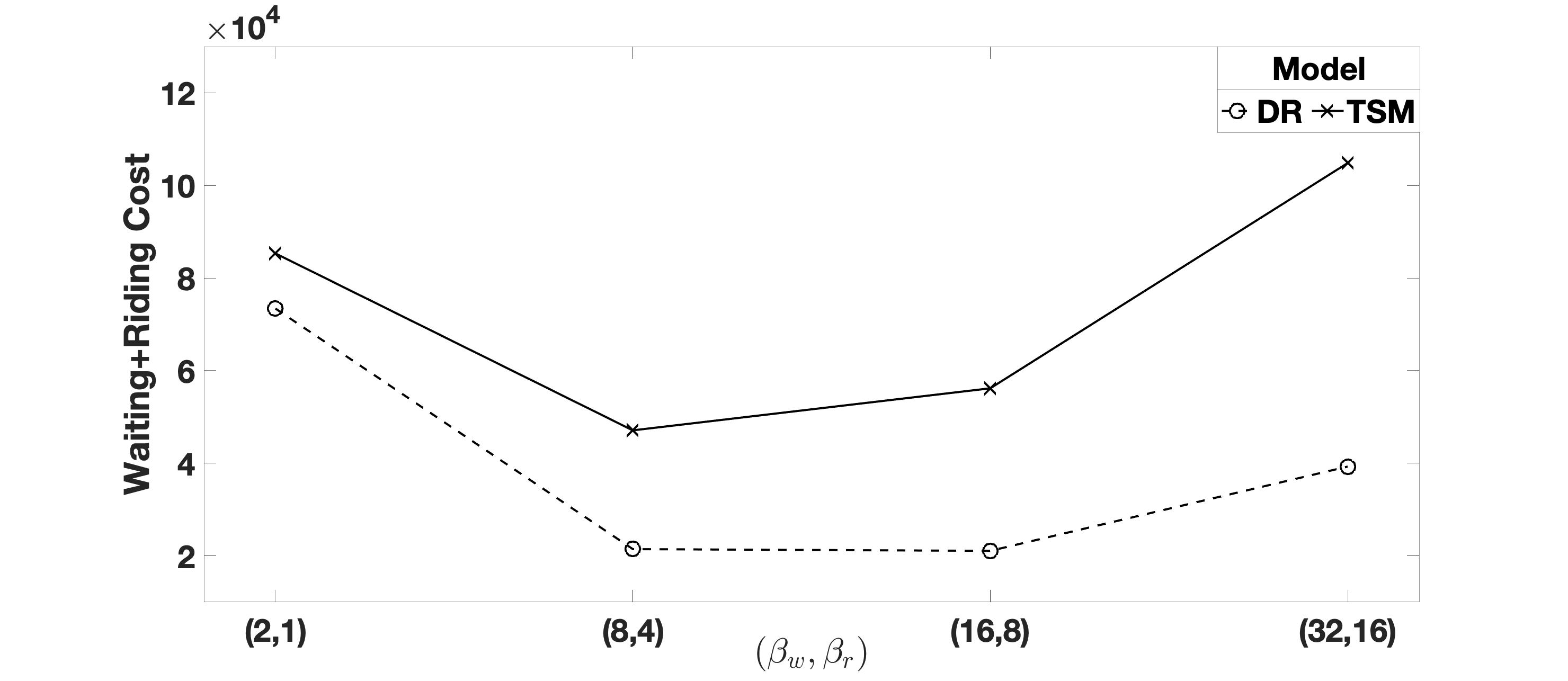}
      \caption{Average 2nd-stage cost vs.  ($\beta_w, \beta_r$), $f_s=7,000$}
      \label{Fig1b:Cost_70000_1_4}
    \end{subfigure}%
    
\begin{subfigure}[b]{0.5\textwidth}
 \centering
        \includegraphics[width=\textwidth]{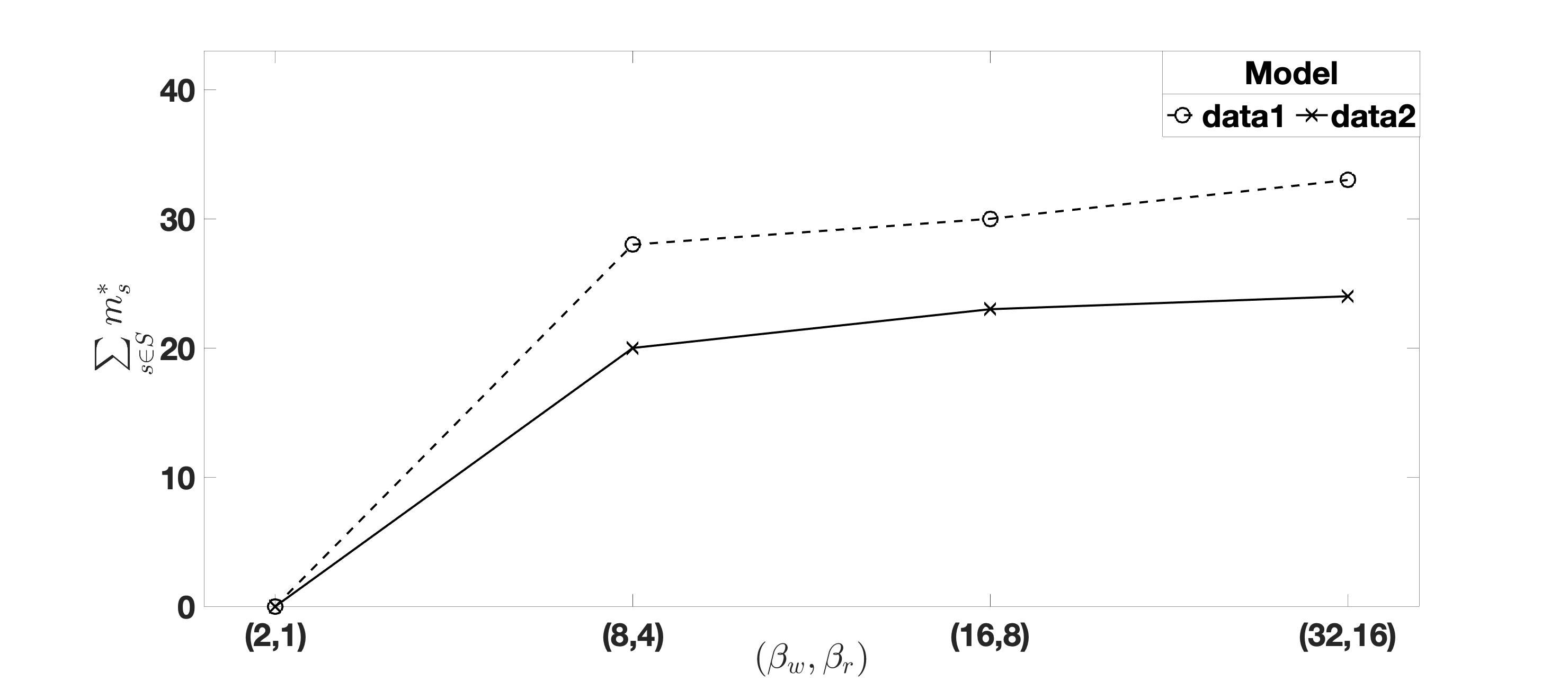}
        \caption{Optimal fleet size vs. ($\beta_w, \beta_r$), $f_s=10,000$}
        \label{Fig1a:M_10000_1_4}
    \end{subfigure}%
    \begin{subfigure}[b]{0.5\textwidth}
            \includegraphics[width=\textwidth]{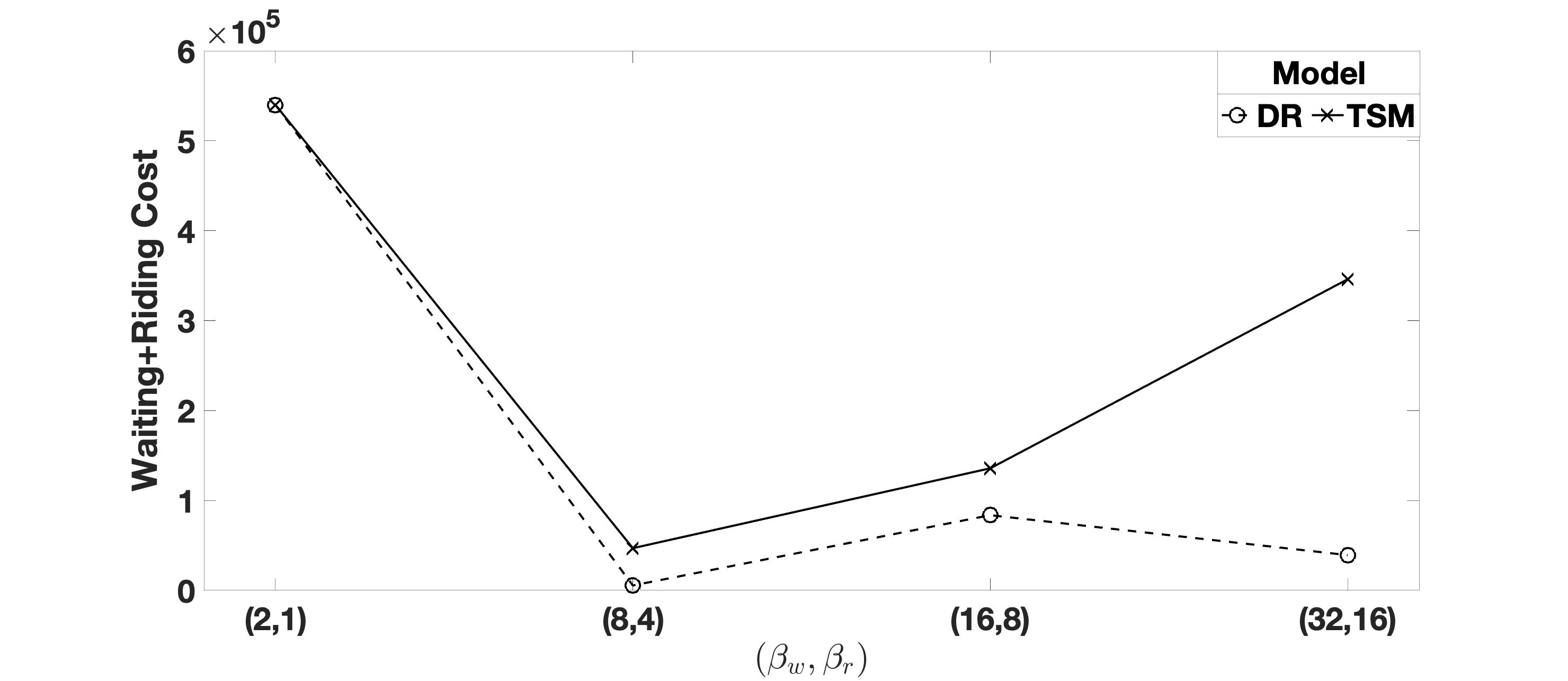}
      \caption{Average 2nd-stage cost vs.  ($\beta_w, \beta_r$), $f_s=10,000$}
      \label{Fig1b:Cost_100000_1_4}
    \end{subfigure}%
    
        \caption{Optimal fleet size ($\sum \limits_{s \in S} m_s^*$) and the associated second-stage cost (waiting+riding time costs) under demand range $[1,4]$ and different values of $f_s$ and ($\beta_w, \beta_r$). TSM is the two-stage SP model.}\label{Fig:Sens2}
    \end{figure}

\textcolor{black}{We first observe that the optimal fleet size, $\sum \limits_{s \in S}m_s^*$ ,  yielded by the DR model is always larger than that of the \blue{SP} model under all values of $f_s$ and ($\beta_w, \beta_r$). Second, we observe that both models allocate more vehicles under [4,7] than under [1,4], irrespective of the values of cost parameters, which makes sense given that the former implies a higher volume of demand for last-mile service that needs fulfilling.} \textcolor{black}{Third, we observe that for a fixed value of ($\beta_w, \beta_r$), the optimal fleet size $\sum \limits_{s \in S}m_s^*$ of the DR and \blue{SP} decreases as the unit fixed cost $f_s$ increases. For example, when  ($\beta_w, \beta_r$)= (2,1) and range equal [4,7], the $\sum \limits_{s \in S}m_s^*$ of the DR and \blue{SP}, respectively, decreases from 62 and 39 to 12 and 7 vehicles as $f_s$ increases from $f_s=4,000$ to $f_s=7,000$ (see Figure~\ref{Fig1a:M_4000_4_7} and \ref{Fig1a:M_7000_4_7}). }

\vspace{1mm}

\textcolor{black}{Fourth, we observe that as the values of ($\beta_w, \beta_r$) increase (i.e., passenger waiting and riding time become more important/expensive), the optimal fleet size $\sum \limits_{s \in S}m_s^*$ yielded by DR and \blue{SP} increase regardless of the unit fixed cost $f_s$ and range of passengers. Finally, we observe that by allocating a larger fleet, the DR always yields a substantially lower second-stage cost (i.e., waiting and riding times) than the \blue{SP}, which indicates a better quality of service for passengers. However, this of course comes at a higher fixed cost. The relative difference in the fixed cost between DR and \blue{SP} ranges from 0 to 40\%, and the relative difference in the second-stage cost ($\frac{DR-\blue{SP}}{DR} \times 100\%$) from 15\% to 333\%. Practitioners may be willing to pay the extra one-time fixed cost of DR solutions to provide a better quality of service in terms of lower waiting and riding times, and thus maintain customer satisfaction and a good business reputation. }

\begin{figure}
     \begin{subfigure}[b]{0.5\textwidth}
 \centering
        \includegraphics[width=\textwidth]{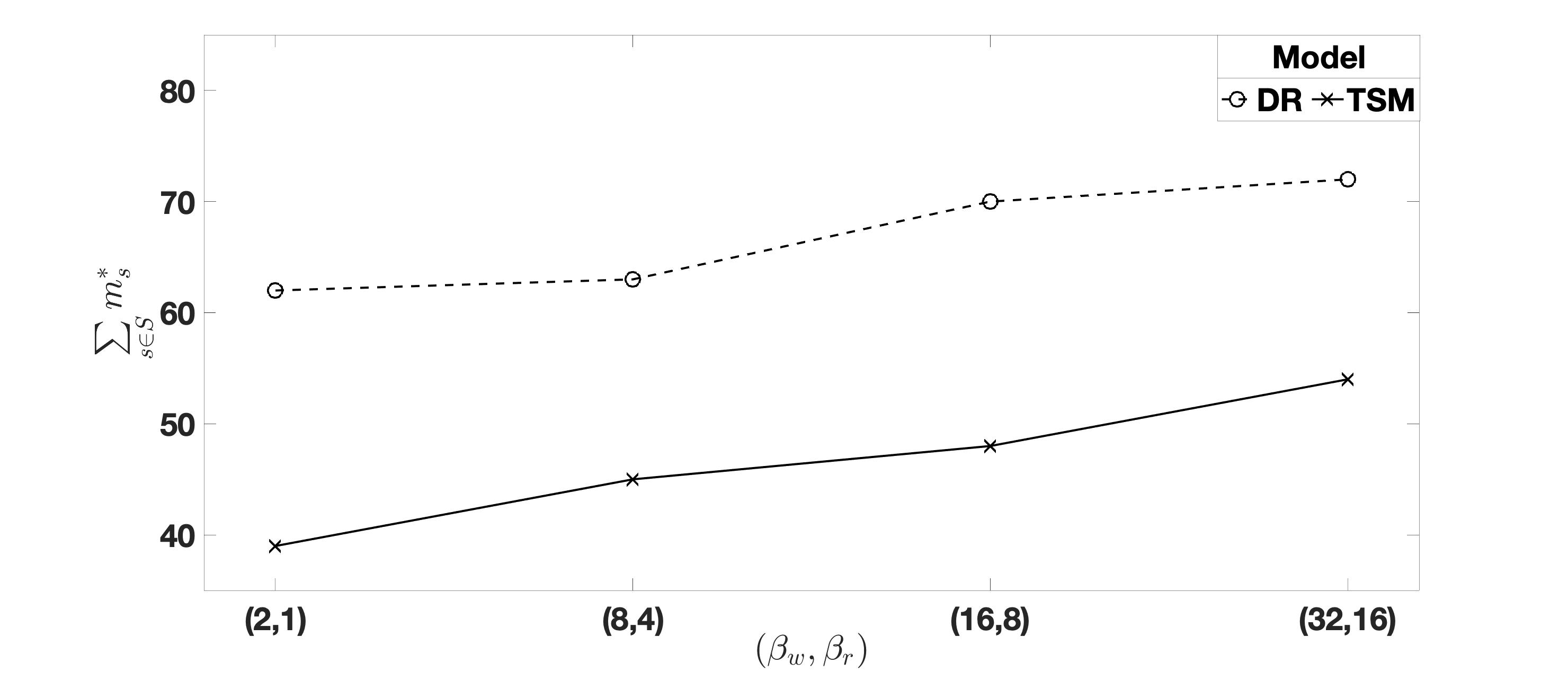}
        \caption{Optimal fleet size vs. ($\beta_w, \beta_r$), $f_s=4,000$}
        \label{Fig1a:M_4000_4_7}
    \end{subfigure}%
    \begin{subfigure}[b]{0.5\textwidth}
            \includegraphics[width=\textwidth]{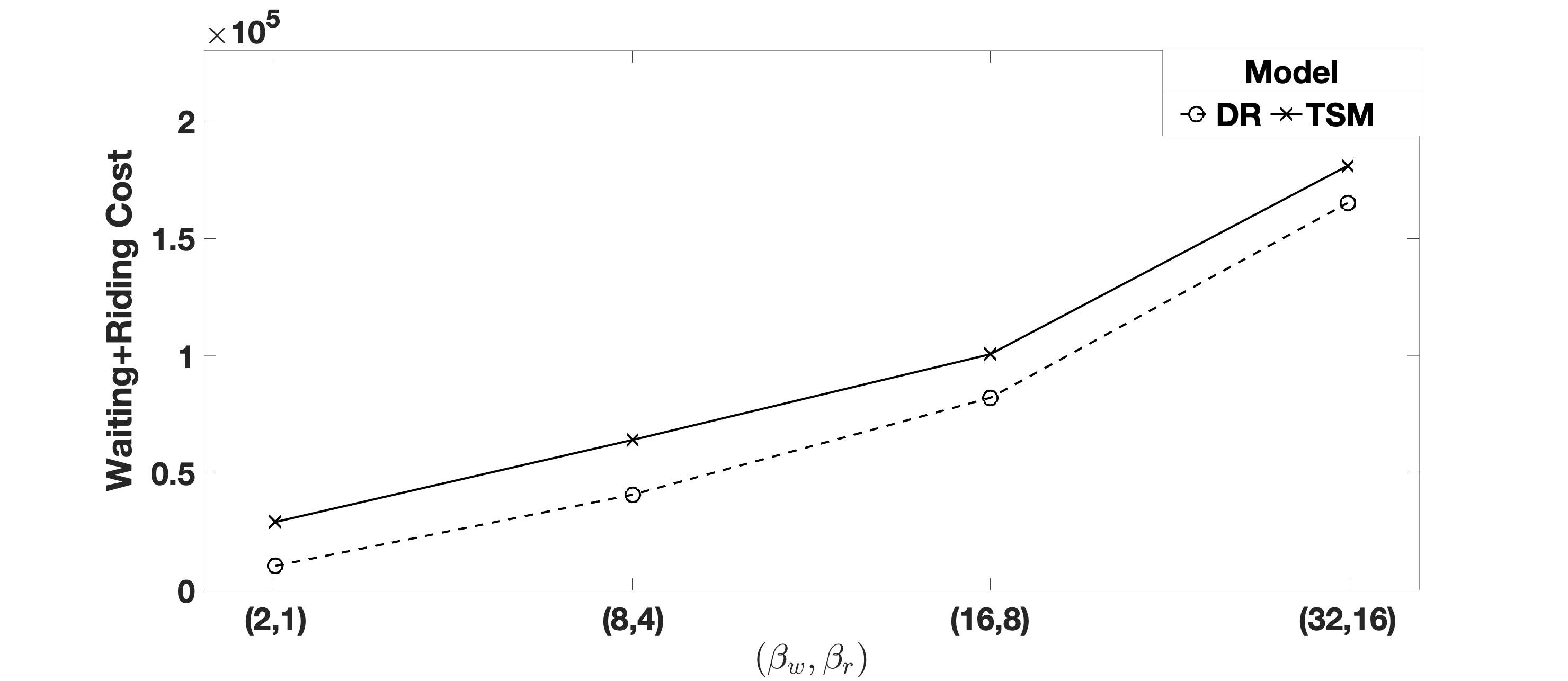}
      \caption{Average 2nd-stage cost vs. ($\beta_w, \beta_r$), $f_s=4,000$}
      \label{Fig1b:Cost_40000_4_7}
    \end{subfigure}%
    
     \begin{subfigure}[b]{0.5\textwidth}
 \centering
        \includegraphics[width=\textwidth]{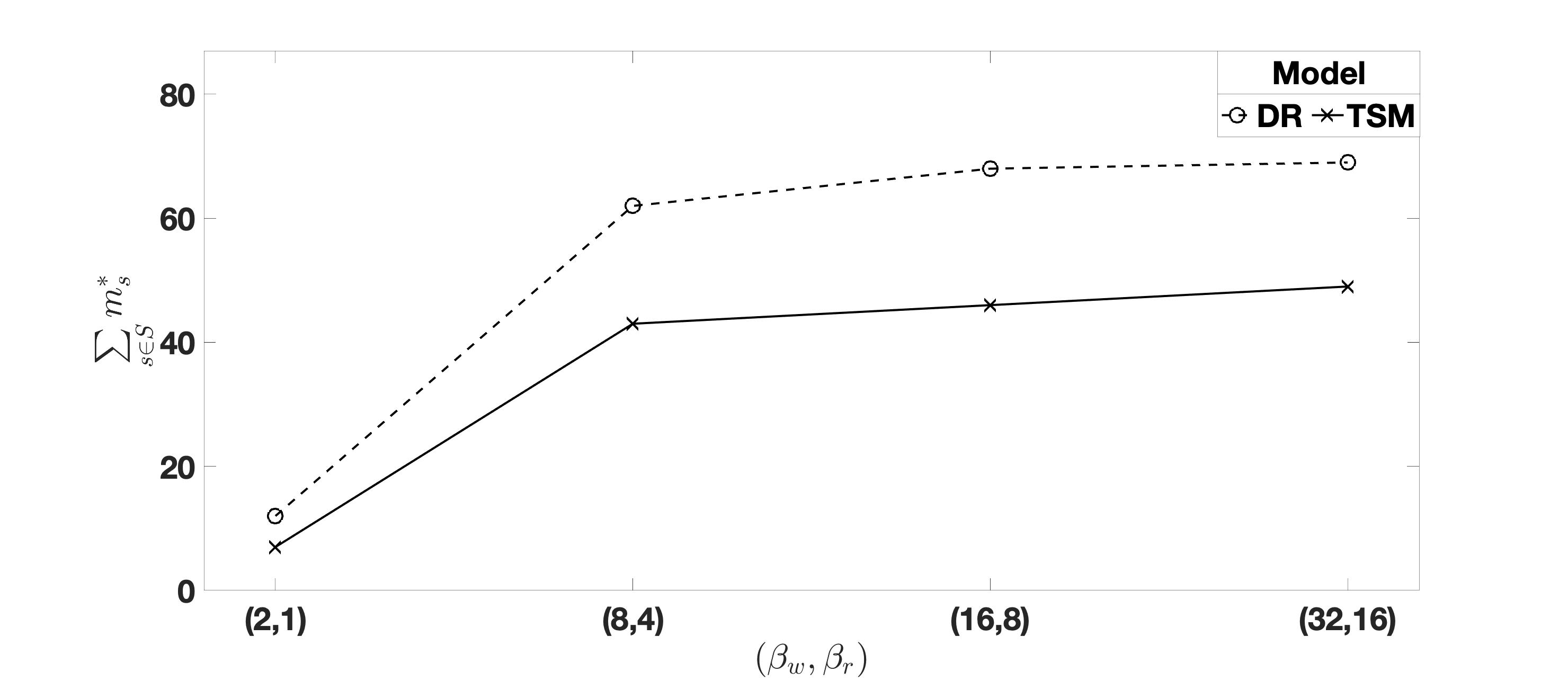}
        \caption{Optimal fleet size vs. ($\beta_w, \beta_r$), $f_s=7,000$}
        \label{Fig1a:M_7000_4_7}
    \end{subfigure}%
    \begin{subfigure}[b]{0.5\textwidth}
            \includegraphics[width=\textwidth]{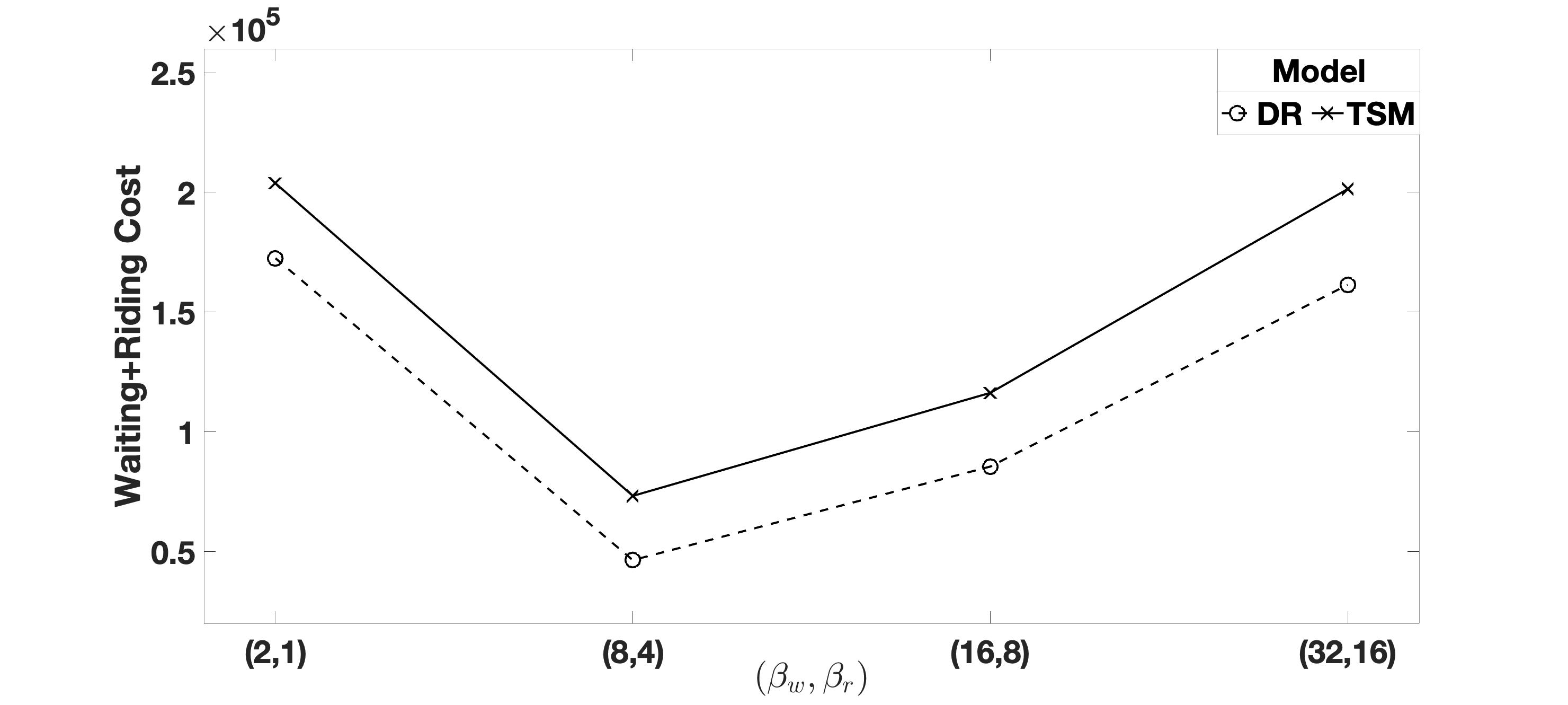}
      \caption{Average 2nd-stage cost vs. ($\beta_w, \beta_r$), $f_s=7,000$}
      \label{Fig1b:Cost_70000_4_7}
    \end{subfigure}%
    
\begin{subfigure}[b]{0.5\textwidth}
 \centering
        \includegraphics[width=\textwidth]{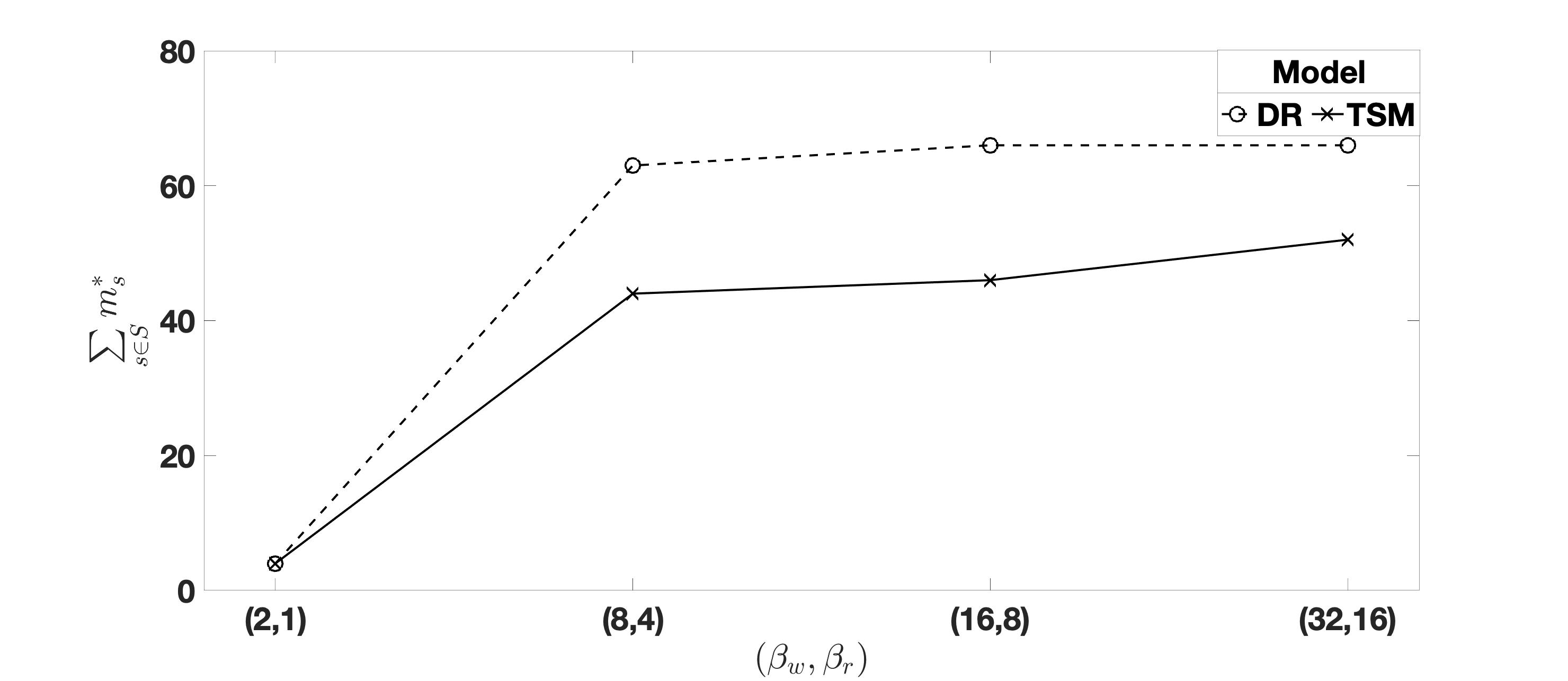}
        \caption{Optimal fleet size vs. ($\beta_w, \beta_r$), $f_s=10,000$}
        \label{Fig1a:M_10000_4_7}
    \end{subfigure}%
    \begin{subfigure}[b]{0.5\textwidth}
            \includegraphics[width=\textwidth]{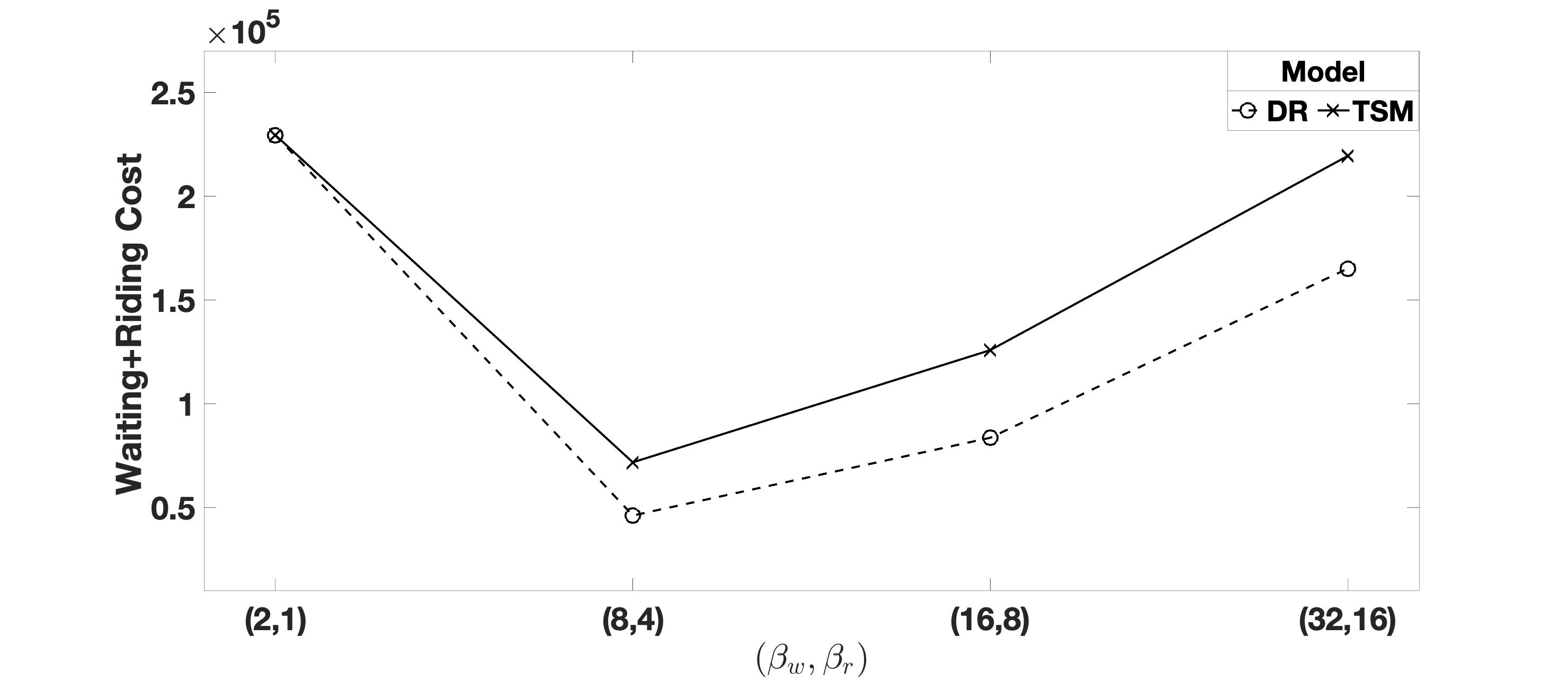}
      \caption{Average 2nd-stage cost vs. ($\beta_w, \beta_r$), $f_s=10,000$}
      \label{Fig1b:Cost_100000_4_7}
    \end{subfigure}%
    
        \caption{Optimal fleet size ($\sum \limits_{s \in S} m_s^*$) and the associated second-stage cost (waiting+riding time costs) under demand range $[4,7]$ and different values of $f_s$ and ($\beta_w, \beta_r$). TSM is the two-stage SP model.}\label{Fig:Sens3}
    \end{figure}

%
%
%
%

\section{Conclusion}\label{sec:Conclusion}

\noindent In this paper, we investigate the fleet sizing and allocation problem for the on-demand last-mile transportation systems. Specifically, we consider the perspective of a last-mile service provider who wants to determine the number of servicing vehicles allocated to multiple service regions. In each service region, passengers demanding last-mile services arrive in batches, and allocated vehicles deliver passengers to their final destinations. The size of each batch of passengers is random and hard to predict in advance. \blue{The quality of fleet-allocation decisions is a function of vehicle fixed cost plus a weighted sum of passenger's waiting time before boarding a vehicle and in-vehicle riding time.} 

We propose, analyze, and evaluate the computational and operational performance of two models for the fleet sizing and allocation problem, assuming known and unknown distribution of the demand, respectively. First, we propose a stochastic programming model to minimize the the fixed cost of allocated vehicles and the expectation of a weighted sum of passenger waiting and riding times, under a distributional belief of demand. Second, we propose a distributionally robust model to minimize the fixed cost of vehicles and the worst-case (i.e., maximum) expectation of passenger waiting time and riding times. We also conduct a set of numerical experiments and discuss the insights and implications by examining trade-offs between total cost, fleet size, and passenger waiting and riding times.

Our study opens other avenues that merit further exploration. To name a few, (1) LMTS fleet sizing and allocation given train arrival uncertainty; (2) LMTS planning and operations under certain special types of demand uncertainty; e.g., multi-modal distribution of demand; (3) fleet sizing and allocation for an on-demand transportation system that combines last- and first-mile services; (4) pricing for last-mile services for multiple service regions with demand uncertainty; \textcolor{black}{(5) incentive and subsidy mechanism design if drivers in the fleet are independent income-seeking decision-makers (e.g., \citet{sun2019model}, \citet{zhu2021mean}); and (6) distributionally robust optimization models for other optimization problems in on-demand transportation; e.g., vehicle allocation, routing, and scheduling for hybrid services with both fixed and flexible routes. Finally, our computational experiment is not all based on real-world case studies or exact data due to the lack of benchmark instances on the specific LMTS problem that we address in this paper. We hope that our approach and results will also motivate future data collection efforts and standardized benchmark instances. The availability of high-quality data will enable the development of data-driven approaches for this and other emerging LMTS problems. }

\vspace{3mm}

\vspace{1mm}
\noindent \textbf{\textbf{\blue{Acknowledgment}}}

\vspace{1mm}

\noindent \blue{We want to thank all of our colleagues and practitioners who have contributed significantly to the related literature.  We are grateful to the anonymous reviewers for their insightful comments and suggestions that allowed us to improve the paper. Dr. Karmel S. Shehadeh dedicates her effort in this paper to every little dreamer in the whole world who has a dream so big and so exciting. Believe in your dreams and do whatever it takes to achieve them--the best is yet to come for you.}

\vspace{2mm}

\noindent \textbf{References}

\bibliographystyle{elsarticle-harv}
\bibliography{LMTSRef}

\newpage

\clearpage 
\appendix
\section{\blue{Proof of Proposition~\ref{Prop2}}}\label{Appex:Proof_Prop2}

\begin{proof}

\begin{subequations}\label{Inner2_Proof}
\begin{align}
\max_{\nb \in [\pmb{\nL,\nU}]} &  \Big \{ \sum \limits_{s \in S} \sum \limits_{i \in I_s} \sum \limits_{j \in J_s}   n_{i,j,s} (\Gamma_{i,j,s}-\rho_{i,j,s}) + \sum \limits_{s \in S} \sum \limits_{i \in I_s} \sum \limits_{k \in K_s} c w_{i,k,s} \psi_{i,k,s}  \Big \} \label{ObjDual_Proof}\\
\st & \ \   \Gamma_{i,j,s}- \Gamma_{i+1,j,s} \leq \beta^{\mbox{\tiny w}} h, \qquad \forall (s, i, j) \label{ConstDual1_proof}\\
 & \ \  \phi_{j,k} (\Gamma_{i,j,s}+ \psi_{i,k,s}) \leq t_{j,k} \beta^{\mbox{\tiny r}}, \qquad \forall (i, j, k, s) \label{ConstDual2_Proof}\\
 & \ \ \psi_{i,k,s} \leq 0, \ \Gamma_{i,j,s} \geq 0, \  \Gamma_{I+1,j,s}=0, \qquad \forall (i, j, k, s)\label{ConstDual3_Proof}
\end{align}
\end{subequations}

\noindent First, we rewrite constraints \eqref{ConstDual2_Proof} as $ \phi_{j,k} \psi_{i,k,s} \leq   t_{j,k} \beta^{\mbox{\tiny r}}- \phi_{j,k} \Gamma_{i,j,s} $. Given that $\psi_{i,k,s} \leq 0$ and the objective of maximizing a positive number times $\Gamma_{i,j,s}$ in \eqref{Inner2_Proof},  then, without loss of optimality, we can assume that $\Gamma_{i,j,s} \geq 0$, for all $i \in [I], \ j\in [J], $ and $s \in [S]$. Note that if $\Gamma_{i,j,s} <0$, then $\phi_{j,k} \psi_{i,k,s} \leq  t_{j,k} \beta^{\mbox{\tiny r}}+\phi_{j,k} |\Gamma_{i,j,s}|=$ a positive number. Given that $\psi_{i,k,s} \leq 0$, then in this case, condition $\phi_{j,k} \psi_{i,k,s} \leq $ a positive number is relaxed and the first term in the objective will be negative for $\Gamma_{i,j,s}$. It follows that $\Gamma_{i,j,s} \geq 0$ in the optimal solution.  \blue{Second, we consider the following  cases, for fixed $s \in S, i \in I_s, j \in J_s$ and fixed $\rho_{i,j,s}$:}

\vspace{2mm}
\begin{itemize}

\item  \textbf{Case 1}: when $\rho_{i,j,s} \geq 0$
\begin{itemize}\itemsep0em
\item If $0 \leq  \Gamma_{i,j,s}<\rho_{i,j,s} $. In this case, $(\Gamma_{i,j,s}-\rho_{i,j,s}) <0$ and so $n_{i,j,s}=\nL_{i,j,s}$ maximizes \eqref{ObjDual_Proof}. Thus, the first term in the objective function \eqref{ObjDual_Proof} reduces to:
\begin{align}\label{eq:inter1}
\nL_{i,j,s} (\Gamma_{i,j,s}-\rho_{i,j,s}) \leq 0
\end{align}
\item If  $ \Gamma_{i,j,s}>\rho_{i,j,s}\geq 0$. In this case, $(\Gamma_{i,j,s}-\rho_{i,j,s}) >0$, $n_{i,j,s}=\nU_{i,j,s}$, and the first term in the objective function \eqref{ObjDual_Proof} reduces to:
\begin{align}\label{eq:inter2}
\nU_{i,j,s} (\Gamma_{i,j,s}-\rho_{i,j,s})  \geq 0
\end{align}

\end{itemize}

Note that \eqref{eq:inter2}$>$\eqref{eq:inter1}. It follows that if $\rho_{i,j,s} \geq 0$, then $ \Gamma_{i,j,s}\geq \rho_{i,j,s}$ and  $n_{i,j,s}=\nU_{i,j,s}$ maximizes the objective of \eqref{Inner2_Proof}. In other words, 
when $\rho_{i,j,s} \geq 0$,  $n_{i,j,s}=\nU_{i,j,s}$ is optimal to \eqref{Inner2_Proof} and the objective equals $\nU_{i,j,s} (\Gamma_{i,j,s}-\rho_{i,j,s})$ as in \eqref{eq:inter2}.

\vspace{2mm}

\item \textbf{Case 2}: when $\rho_{i,j,s} <0 $, then $ \Gamma_{i,j,s}>\rho_{i,j,s}$ (given that $\Gamma_{i,j,s}\geq 0$). In this case, $(\Gamma_{i,j,s}-\rho_{i,j,s})=(\Gamma_{i,j,s}+| \rho_{i,j,s}|) \geq 0$, $n_{i,j,s}=\nU_{i,j,s}$, and the first term in the objective  \eqref{ObjDual_Proof} reduces to:
\begin{align}\label{eq:inter3}
\nU_{i,j,s} (\Gamma_{i,j,s}+| \rho_{i,j,s}|) \geq 0 
\end{align}
Thus,  when $\rho_{i,j,s} <0 $, the objective of \eqref{Inner2_Proof} equals to $\nU_{i,j,s} (\Gamma_{i,j,s}- \rho_{i,j,s}) $

\end{itemize}

\noindent It follows from the above analysis, that $n_{i,j,s}=\nU_{i,j,s}$ is optimal to \eqref{Inner2_Proof}. Thus,, w.l.o.o., we can set $n_{i,j,s}=\nU_{i,j,s}$ in \eqref{Inner2_Proof}. Accordingly, \eqref{Inner2_Proof} is equivalent to 
\begin{subequations}\label{Inner23}
\begin{align}
\max \limits_{\Gamma, \psi} & \ \  \Big \{ \sum \limits_{s \in S} \sum \limits_{i \in I_s} \sum \limits_{j \in J_s} \nU_{i,j,s} (\Gamma_{i,j,s}-\rho_{i,j,s})+ \sum \limits_{s \in S} \sum \limits_{i \in I_s} \sum \limits_{k \in K_s} c w_{i,k,s} \psi_{i,k,s}    \Big\} \\
\ \ \ \st \ & \ \   \Gamma_{i,j,s}- \Gamma_{i+1,j,s} \leq \beta^{\mbox{\tiny w}} h, \qquad \forall (s, i, j) \label{ConstDual13}\\
 & \ \  \phi_{j,k} (\Gamma_{i,j,s}+ \psi_{i,k,s}) \leq t_{j,k} \beta^{\mbox{\tiny r}}, \qquad \forall (i, j, k, s) \label{ConstDual23}\\
 & \ \ \psi_{i,k,s} \leq 0, \ \Gamma_{I+1,j,s}=0, \qquad \forall (i, j, k, s)\label{ConstDual33}
\end{align}
\end{subequations}
For fixed $w$ and $\rho$, problem \eqref{Inner23} is a bounded and feasible linear program. We formulate \eqref{Inner23} in its dual form as 
\begin{subequations}\label{Inner24}
\begin{align}
\min \limits_{y \geq 0,x\geq 0} & \  \sum \limits_{s \in S} \sum \limits_{i \in I_s} \sum \limits_{j \in J_s} \Big[ \beta^{\mbox{\tiny w}} h y_{i,j,s}+ \sum \limits_{k \in K_s} t_{j,k} \beta^{\mbox{\tiny r}} x_{i,j,k,s} \Big]-\sum \limits_{s \in S} \sum \limits_{i \in I_s} \sum \limits_{j \in J_s} \nU_{i,j,s}\rho_{i,j,s} \\
\st & \ y_{0,j,s}+\sum \limits_{k \in K_s} \phi_{j,k} x_{0,j,k,s} \geq  \nU_{0,j,s}, \qquad \forall s \in S,  \ j \in J_s  \\
&  \ y_{i,j,s} -y_{i-1,j,s}+ \sum \limits_{k \in K_s} \phi_{j,k} x_{i,j,k,s} \geq  \nU_{i,j,s}, \qquad \forall s \in S, \  i \in I_s\setminus \{0\}, \  j \in J_s \\
&  \  \sum \limits_{j \in J_s} \phi_{j,k} x_{i,j,k,s}  \leq cw_{i,k,s}, \qquad \forall s \in S, \ i \in I_s, \ j \in J_s
\end{align}
\end{subequations}

\noindent Combining the inner problem in the form of \eqref{Inner24} with the outer minimization problems in  \eqref{eq:FinalDualInnerMax-1} and \eqref{Obj:DRObjective2}, we derive the following equivalent reformulation of the DR model in \eqref{Obj:DRObjective2}
\begin{subequations}
\begin{align}
 \min  &\Bigg \{ \sum_{s \in S} f_s m_s+ \sum \limits_{s \in S} \sum \limits_{i \in I_s} \sum \limits_{j \in J_s}( \mu_{i,j,s}- \nU_{i,j,s} )\rho_{i,j,s}+   \sum \limits_{s \in S} \sum \limits_{i \in I_s} \sum \limits_{j \in J_s} \Big[ \beta^{\mbox{\tiny w}} h y_{i,j,s}+ \sum \limits_{k \in K_s} t_{j,k} \beta^{\mbox{\tiny r}} x_{i,j,k,s} \Big] \Bigg \}\\
 \st & \ y_{0,j,s}+\sum \limits_{k \in K_s} \phi_{j,k} x_{0,j,k,s} \geq  \nU_{0,j,s}, \qquad \forall s \in S,  \ j \in J_s  \\
&  \ y_{i,j,s} -y_{i-1,j,s}+ \sum \limits_{k \in K_s} \phi_{j,k} x_{i,j,k,s} \geq   \nU_{i,j,s}, \qquad \forall s \in S, \  i \in I_s\setminus \{0\}, \  j \in J_s \\
&  \  \sum \limits_{j \in J_s} \phi_{j,k} x_{i,j,k,s}  \leq cw_{i,k,s}, \qquad \forall s \in S, \ i \in I_s, \ j \in J_s\\
& \ m \in \mathcal{M}, \ w \in \mathcal{W}, \  y \geq 0, \ x\geq 0, \ \rho \geq 0
\end{align}
\end{subequations}

\end{proof}

\newpage
\section{Sample average approximation of \blue{SP}}\label{Appx:SAA}
\noindent There are two well-known difficulties in obtaining an (exact) optimal solution to the \blue{SP} in \eqref{SP_model}. First, evaluating the value of  $\mathbb{E}_\mathbb{P} [Q (m, \xi)]$ involves taking multi-dimensional integrals and solving a huge number of similar integer programs. Second, both $\mathbb{E}_\mathbb{P} [Q (x, \xi)]$ and $Q(x, \xi)$ are non-convex and discontinuous \citep{birge2011introduction, shapiro2009lectures}. In view of these two difficulties, we resort to approximation solution approaches, and the sample average approximation (SAA) approach in particular.

\vspace{1mm}

 In SAA, we replace the distribution of $\xi$ with a (discrete) empirical distribution based on $R$ independent and identically distributed (i.i.d.) samples of random demand, then solve the sample average approximation \eqref{SAAModel} of \eqref{SP_model}. Note that in the SAA formulations \eqref{SAAModel}, we associate all scenario-dependent parameters, variables, and constraints with a scenario index $r$ for all $r=1,\ldots, R$. \textcolor{black}{For example, parameters $n$ by $n^{\mbox{\tiny r}}$ to represent  number of passengers realized in scenario $r$, and variables $u$ are replaced by $u^r$ to represent the number of unserved passengers in scenario $r$.} In addition, constraints \eqref{Const1}--\eqref{Const6} are incorporated in each scenario.
\begin{subequations}\label{SAAModel}
\begin{align}
& v_R= \min \hat{f}_R:= \frac{1}{R}  \sum \limits_{r=1}^R \Big[  \ \beta^{\mbox{\tiny w}} \sum \limits_{s\in S}\sum \limits_{i \in I_s} \sum \limits_{j \in J_s}  h u_{i,j,s}^r+ \beta^{\mbox{\tiny r}} \sum \limits_{s\in S} \sum \limits_{i \in I_s} \sum \limits_{j \in J_s} \sum \limits_{k \in K_s} t_{j,k} z_{i,j,k,s}^r \Big]
  \label{SAAObj}\\
&  \ \  \text{s.t.} \ \ \  \ m \in \mathcal{M}, \ w \in \mathcal{W}\label{SAAConst1}\\
&\quad  \   \quad  \quad  \eqref{Const1}-\eqref{Const6}, \ \ \text{for } r=1,\ldots,R
\end{align}
\end{subequations}
 
\newpage

\color{black}
\section{\blue{Construction and Statistics of the NYC Instance}}\label{AppexNYC}
\blue{We construct the NYC instance based on the dataset the procedure as follows:}
\color{black}
\begin{enumerate}\itemsep0em
    \item Select 4 metro stations that are relatively far away from each other in Manhattan, NYC;
    \item For each station, construct a 1-mile by 1-mile square as a last-mile service region with the station located in the center;
    \item For each service region, consider the passengers with destination within the region as potential demand for LMTS;
    \item For each service region $s$, cluster the passenger destinations to $J_s$ clusters; assume the center of each cluster $j_s$ as the location of a last-mile stop $j_s$ to cover all passengers in that cluster;
    \item For each last-mile stop $j$ in service region $s$, record the number of passengers going to that stop (i.e., with destination in that cluster) within each time interval $i$ (e.g., every 5 or 10 minutes) as the batch demand $n_{i,j,s}$ for the LMTS; 
    \item For each last-mile stop $j$ in service region $s$, compute the upper bound, lower bound, $20\%$ percentile, and $80\%$ percentile for batch demand $n_{i,j,s}$ across $i$ in certain period (e.g., 10 am to 11 am);
    \item  Using the locations of all last-mile stops in each service region $s$, generate $K_s$ routes to serve a subset of stops (e.g., serve 1, 2, and 3 stops), all of which start from and return to the metro station; 
    \item For each route $k$, record its total travel time $t_k$, stop-route configuration $\phi_{j,k}$, and travel time to each stop $t_{j,k}$.
\end{enumerate}

\setcounter{table}{0}
\begin{table}[h!]
 \center 
 \color{black}
   \renewcommand{\arraystretch}{0.3}
\noindent  \caption{\blue{NYC Instance. \\Notation: $S$ is number of regions, $s$ is a region, $J_s$ is number of last-mile stops in regions $s$, $K_s$ is number of routes in region $s$.}}
\begin{tabular}{ccccccccccccc}
\hline
\textbf{Inst} & \textbf{$S$} & $s$ &   \textbf{$J_s$}   & \textbf{$K_s$} \\
\hline
NYC &  4   & $ \ \ s=1$ &  5& 31  \\
&  & $ \ \ s=2$ &  6 & 30 \\
&& $ \ \ s=3$ &  4& 15\\
&& $ \ \ s=4$ &  5&20\\
\hline
\end{tabular} 
\label{table:NYC}
\end{table}


 \begin{table}[h!]
 \center 
 \color{black}
   \renewcommand{\arraystretch}{0.4}
  \caption{\blue{Statistics of the batch demand for each last-mile stop per service region in the NYC instance. \\Notation:$\mu$ and $\sigma$ are respectively the empirical mean and standard deviations of batch demand $n_{i,j,s}$}.}
\begin{tabular}{llllllll}
\hline
Region, $s$	& LM stop, $j$ &  $\mu $	&		$\sigma$ \\
\hline
$s=1$ & $j=1$ & 1.11 & 1.64 \\
 & $j=2$ &1.13 & 1.88\\
 & $j=3$ &2.23 & 2.72\\
 &$j=4$ & 1.13& 2.45\\
  &$j=5$ & 1.30& 2.25\\
 \\
 $s=2$ & $j=1$ & 1.40& 1.72 \\
 & $j=2$ & 1.43 & 1.93 \\
 & $j=3$ & 1.20 & 1.92\\
 & $j=4$ &2.63 & 2.54\\
  & $j=5$ &2.63 & 2.54\\
  & $j=6$ &1.83 & 1.89\\
  \\
 $s=3$ & $j=1$ &2.43& 2.86 \\
 & $j=2$ & 3.33 & 2.37\\
 & $j=3$ &2.63 & 3.10 \\
& $j=4$ & 3.23 & 3.58\\
\\
$s=4$ &  $j=1$ & 1.37 &2.86\\
 & $j=2$ &  1.13& 2.37\\
 & $j=3$ &1.90 & 3.10\\
& $j=4$ &1.20 & 3.58\\
  & $j=5$ &1.70 & 2.58\\
\hline	
\end{tabular} 
\label{table:NYCStats}
\end{table}

\newpage
\color{black}

\section{Values of parameters $f_s$, $\beta^w$, and $\beta^r$}\label{Appix:fixed cost}

\noindent Let the average after-tax hourly wage of passengers be $g$/hour. According to \cite{gomez1999essays}:
\begin{itemize}\itemsep0em
\item Monetary value of riding time ($76\%$ of after-tax wage): $\$0.76g$/hour$ \approx\$0.0127g$/minute.
\item Monetary value of waiting time ($195\%$ of after-tax wage): $\$1.95g$/hour=$\$0.0325g$/minute.
\item Average hourly total fixed cost of a vehicle (with capacity 5), including the cost to rent the vehicle, wage paid to the driver, fuel cost, and other maintenance and operating costs: $b$/hour.

\end{itemize}

In the LMTS, we have $I$ trains with headway $h$ minute. The duration of the time that vehicles are needed is slightly longer than $Ih$. Then, we can approximate the fixed cost $f_s$, parameter $\beta^w$, and $\beta^r$ as $f_s=b\cdot\frac{Ih}{60}=\frac{bIh}{60}$; $\beta^r=0.0127g$; $\beta^w=0.0325g$.

Scenario 1: Assuming there is an existing fleet with no additional cost, we have $f_s=0$, $\beta^r=0.0127g$, and $\beta^w=0.0325g$. In the numerical experiments, we normalize $\beta^r$ to 1 and evaluate this scenario with the following parameters: $f_s=0$, $\beta^r=1$; $\beta^w\in[2,3]$.

Scenario 2: Assuming the average after-tax hourly wage of passengers (e.g., working adults who are more sensitive to riding and waiting times) $g=\$10$/hour and the average hourly total cost of a vehicle (e.g., regular vehicle) $b=\$30$/hour, then:
$f_s=\frac{30\cdot 10 \cdot 10}{60}=50$; $\beta^r=0.127$; $\beta^w=0.325$. In the numerical experiments, we normalize $\beta^r$ to 1 and evaluate this scenario with the following parameters: $f_s \in[200,600]$, $\beta^r=1$; $\beta^w\in[2,3]$.

Scenario 3: Assuming the equivalent average after-tax hourly wage of passengers (e.g., children, students, seniors, and the disabled, who are less sensitive to riding and waiting times) $g=\$3$/hour and the average hourly total cost of a vehicle (e.g., vehicle with special equipment for children, seniors, and the disables) $b=\$90$/hour, then
$f_s=\frac{90\cdot 10 \cdot 10}{60}=150$; $\beta^r=0.0381$; $\beta^w=0.0975$. In the numerical experiments, we normalize $\beta^r$ to 1 and evaluate this scenario with the following parameters: $f_s \in[2,000,6,000]$, $\beta^r=1$; $\beta^w\in[2,3]$.


\setcounter{table}{0}
\section{\textcolor{black}{Example of In-sample performance under $f_s=0$}}\label{Appex:InSampleZeroF}

\noindent \blue{Under $f_s=0$, both models allocate the same numbers of vehicles. As such, they have similar in-sample and out-of-sample simulation performances for all three instances.  As an example, in Table  ~\ref{table:InSamplef=0}, we present in-sample simulation results (i.e., under set 1; perfect information) for instance 1.}

\begin{table}[h!!]
 \footnotesize

  \centering
\caption{In-sample performance of optimal allocation decisions under perfect distributional information, $f_s=0$. The performance of two models are almost the same if demands follow the perfect (in-sample) distribution.} 
    \begin{tabular}{rrrlrr}
    \toprule
    \multicolumn{1}{l}{\textbf{Instance}} & \multicolumn{1}{l}{\textbf{R}} & \multicolumn{1}{l}{\textbf{Metric}} & \textbf{Model} & \multicolumn{1}{l}{\textbf{TC}} & \multicolumn{1}{l}{\textbf{2nd-Stage}} \\
    \midrule
    1     & \multicolumn{1}{l}{[1,4]} & \multicolumn{1}{l}{Mean} & DR    & 4,590  & 4,590 \\
          &       &       & \blue{SP}   & 4,572  & 4,572 \\
          &       & \multicolumn{1}{l}{Median} & DR    & 4,592  & 4,592 \\
          &       &       & \blue{SP}   & 4,573  & 4,573 \\
          &       & \multicolumn{1}{l}{75\%-q} & DR    & 4,722  & 4,722 \\
          &       &       & \blue{SP}   & 4,750  & 4,750 \\
          &       & \multicolumn{1}{l}{95\%-q} & DR    & 4,921  & 4,921 \\
          &       &       & \blue{SP}   & 5,001  & 5,001 \\
          & \multicolumn{1}{l}{[3,7]} & \multicolumn{1}{l}{Mean} & DR    & 9,370  & 9,370 \\
          &       &       & \blue{SP}   & 9,618  & 9,618 \\
          &       & \multicolumn{1}{l}{Median} & DR    & 9,332  & 9,332 \\
          &       &       & \blue{SP}   & 9,605  & 9,605 \\
          &       & \multicolumn{1}{l}{75\%-q} & DR    & 9,776  & 9,776 \\
          &       &       & \blue{SP}   & 9,941  & 9,941 \\
          &       & \multicolumn{1}{l}{95\%-q} & DR    & 10,403 & 10,403 \\
          &       &       & \blue{SP}   & 10,509 & 10,509 \\
    \bottomrule
    \end{tabular}%
\label{table:InSamplef=0}
\end{table}%

\end{document}